\documentclass[11pt]{amsart}

\usepackage{amsmath,amssymb,amsthm}
\usepackage[a4paper,margin=2.8cm]{geometry}
\usepackage[colorlinks=true,linkcolor=blue,citecolor=blue,urlcolor=blue]{hyperref}

\usepackage[normalem]{ulem} %PARA TACHAR

\newtheorem{theorem}{Theorem}[section]
\newtheorem{proposition}[theorem]{Proposition}
\newtheorem{lemma}[theorem]{Lemma}
\newtheorem{corollary}[theorem]{Corollary}
\theoremstyle{definition}
\newtheorem{definition}[theorem]{Definition}
\theoremstyle{remark}
\newtheorem{remark}[theorem]{Remark}

\numberwithin{equation}{section}

\newcommand{\R}{\mathbb{R}}
\newcommand{\Sf}{\mathbb{S}}
\newcommand{\ip}[2]{\langle #1,#2\rangle}
\DeclareMathOperator{\tr}{tr}
\DeclareMathOperator{\Hess}{Hess}
\DeclareMathOperator{\dv}{div}
\DeclareMathOperator{\Dv}{Div}

\def\a{\vec{a}}

\begin{document}

\title[Bernstein's theorem via Omori--Yau]{A short proof of Bernstein's theorem\\ via the Omori--Yau maximum principle} 

\author{Luis J. Al\'{\i}as}\thanks{Corresponding author: Luis J. Al\'{\i}as}
\address{Departamento de Matem\'aticas, Universidad de Murcia, E-30100 Espinardo, Murcia, Spain}
\email{ljalias@um.es}

\author{Miguel A. Mero\~no}
\address{Departamento de Matem\'aticas, Universidad de Murcia, E-30100 Espinardo, Murcia, Spain}
\email{mamb@um.es}

\subjclass[2020]{Primary 53A10; Secondary 53C42, 35B50}
\keywords{Bernstein theorem, minimal surfaces, Omori--Yau maximum principle, angle function}

\begin{abstract}
We give a short, self-contained proof of Bernstein's theorem: every entire minimal graph in
$\R^3$ is a plane. The only global tool is the Omori--Yau maximum principle, which holds on
entire minimal graphs with their induced metric. We apply it exactly once to a single explicit function $F$ built from the surface's angle function and principal curvatures. A pointwise differential inequality for $F$, combined with the Omori--Yau principle, forces it to attain its minimum value everywhere, which yields the planarity of the surface and hence Bernstein's theorem. As a by-product of our approach, we show that the same method, with a different function and with some extra work, also proves in a simple and self-contained way the generalization of Bernstein's theorem given by Osserman that a complete minimal surface in $\R^3$ whose normals omit a neighbourhood of some direction must be a plane.
\end{abstract}

\maketitle

\section{Introduction}

The classical theorem of Bernstein \cite{Bernstein} states the following.

\begin{theorem}[Bernstein]\label{thm:Bernstein}
Let $u\in C^{2}(\R^{2})$ be a solution of the minimal surface equation
\begin{equation}\label{eq:mse}
\Dv\left(\frac{D u}{\sqrt{1+|D u|^{2}}}\right)=0\qquad\text{on }\R^{2},
\end{equation}
\textcolor{black}{where $D$ and $\Dv$ denote, respectively, the gradient and divergence operators on $\R^2$}. Then $u$ is an affine function.
\end{theorem}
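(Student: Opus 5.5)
Let $\Sigma=\{(x,u(x)):x\in\R^2\}$ with induced metric, unit normal $N=(-Du,1)/W$, $W=\sqrt{1+|Du|^2}$, and angle function $\nu=\ip{N}{e_3}=1/W\in(0,1]$. The plan is to show that $\Sigma$ is totally geodesic. Since $\Sigma$ is minimal, this is the same as $|A|^2=-2K\equiv0$, and then $u$ is affine.

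The proof has three steps.

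\emph{Step 1 (completeness and Omori--Yau).} The induced metric dominates the Euclidean metric on $\R^2$ under projection, so $\Sigma$ is complete. Its Gauss curvature is $K=-|A|^2/2\le 0$. Hence the Omori--Yau maximum principle holds on $\Sigma$ by the standard Hessian comparison argument for the distance function (the abstract says we may take this as given). So for any $F$ bounded from below there is a sequence $p_k$ with $F(p_k)\to\inf F$, $|\nabla F(p_k)|<1/k$ and $\Delta F(p_k)>-1/k$.

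\emph{Step 2 (pointwise identities).} We use the standard formulas on minimal surfaces:
\[
\Delta \nu=-|A|^2\nu,\qquad \Delta \log|A|^2 = 4K = -2|A|^2 \quad\text{where } |A|\neq0 .
\]
The second formula comes from the Simons identity in dimension two, i.e.\ from $\log|A|^2$ behaving like $\log|\text{holomorphic}|^2$ of the Hopf differential, corrected by the conformal factor. We then seek a combination such as
\[
F=\frac{|A|^2}{\nu^{\alpha}}\quad\text{or}\quad F=-\frac{c\,|A|}{\nu^{\beta}}+\dots
\]
whose Laplacian has a sign modulo gradient terms. A natural guess is $G=|A|\,/\nu^2$, or a bounded variant $F=-G/(1+G)$, so that $F$ is bounded below. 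Write $\Delta\log|A| = -|A|^2$ and $\Delta\log\nu=-|A|^2-|\nabla\nu|^2/\nu^2$. Then
\[
\Delta\log G=-|A|^2+2|A|^2+2|\nabla \nu|^2/\nu^2\ge |A|^2 ,
\]
so $\Delta\log G\ge |A|^2$. This is a subharmonicity with a strong positive term, which is what a maximum-principle argument wants. We must also relate $|A|^2$ to $G$: on a graph, $|\nabla\nu|^2\le |A|^2(1-\nu^2)$, and one hopes for an inequality like $\Delta G\ge c\,\nu^{\gamma} G^2+(\text{gradient terms})$.

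\emph{Step 3 (Omori--Yau at the supremum).} Apply Omori--Yau to a bounded function such as $\varphi=G/(1+G)$, or directly to $F=-\varphi$, near $\sup\varphi$. Along the sequence the gradient terms vanish and $\Delta\varphi\le 1/k$. Combined with the differential inequality, this forces $|A|^2\to0$ along the sequence, hence $\sup G=0$ in the limit. So $A\equiv0$ and $\Sigma$ is a plane.

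The main obstacle is Step 2/3: making the inequality close without an a priori bound on $|A|^2$. The term $|A|^2$ appearing in $\Delta\log G$ is not obviously comparable to $G$, since $\nu$ may go to $0$. Plain $\sup G$ might be infinite, so Omori--Yau must be applied to a bounded function of $G$, and the leftover gradient terms $|\nabla G|^2/G^2$ must be absorbed. My first attempt is to choose the exponent of $\nu$ and a transform $\psi(G)$ so that
\[
\Delta\psi(G)\ge c\,\psi'(G)\,G\,|A|^2-C\,|\nabla\psi(G)|^2 ,
\]
and to use $\nu\le1$ together with $|A|^2\ge \nu^{4}G^2$ to conclude. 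If that fails, I would instead exploit that $1/\nu$ is a positive supersolution-type quantity and use the Jacobi-field/stability structure: $\nu>0$ gives stability, and then Omori--Yau is applied to $|A|^2/\nu^2$-type quotients.
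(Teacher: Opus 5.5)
There is a genuine gap. You never find a function for which the Omori--Yau argument closes, so Step~3 does not follow from Step~2. Use the exact identity $|\nabla\nu|^2=\tfrac12|A|^2(1-\nu^2)$ rather than an inequality. Then your $G=|A|/\nu^2$ satisfies $\Delta\log G=|A|^2/\nu^2=\nu^2G^2$. The coefficient $\nu^2$ degenerates exactly in the regime you cannot exclude, $\nu\to0$ (i.e.\ $|Du|\to\infty$).

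Concretely, put $\varphi=G/(1+G)$ and $H=1-\varphi=1/(1+G)$. One gets $-H\Delta H\ge\nu^2\varphi^3-|\nabla H|^2$. So an Omori--Yau sequence at $\sup\varphi$ only gives $\nu(p_k)^2\varphi(p_k)^3\to0$, which is compatible with $\sup\varphi>0$ when $\nu(p_k)\to0$. Thus ``this forces $|A|^2\to0$, hence $\sup G=0$'' is a non sequitur. Also, $\nu\le1$ gives $|A|^2\le G^2$, which is the wrong direction for your purpose.

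Your fallback fails in the mirror-image way: $\Delta\log(|A|^2/\nu^2)=(1-\nu^2)\,|A|^2/\nu^2$ degenerates as $\nu\to1$. In fact no pure power of $\nu$ works. For $\kappa^2\nu^{-2a}$ the ratio $\Delta\log(\kappa^2\nu^{-2a})/(\kappa^2\nu^{-2a})=\nu^{2a-2}\big((2a-4)\nu^2+2a\big)$ tends to $0$ or is $\le0$ at one of the ends $\nu\to0$ or $\nu=1$.

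The missing idea is the extra factor $1+\nu$. Since $\Delta\log(1+\nu)=-\kappa^2$ exactly, the function $f=\kappa^2/(\nu^2(1+\nu)^2)$ satisfies $\Delta\log f=2\kappa^2/\nu^2=2(1+\nu)^2f\ge2f$ uniformly in $\nu\in(0,1]$. Hence $\Delta f\ge2f^2$ on all of $\Sigma$; this is trivial at zeros of $f$, which are minima. With $F=(1+f)^{-1/2}$ one has the identity $\tfrac12F^4\Delta f=3|\nabla F|^2-F\Delta F$, so $(1-F^2)^2\le3|\nabla F|^2-F\Delta F$ with no degenerate weight. Omori--Yau at $\inf F$ then gives $F\equiv1$, i.e.\ $A\equiv0$.

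Note also that the paper works with $\kappa^2=\tfrac12|A|^2$ rather than $|A|$: $|A|$ is not $C^2$ where $A$ vanishes, and Omori--Yau is applied to $C^2$ functions. The stability route you sketch at the end is a different, known proof, but it is not an Omori--Yau argument and you have not supplied it.

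Separately, your justification in Step~1 is wrong, though this part is repairable. $K\le0$ is an \emph{upper} curvature bound. The Omori--Yau proof needs an upper bound on $\Delta r$ (or on the Laplacian of some exhaustion), which comes from a \emph{lower} Ricci bound, and Hessian comparison under $K\le0$ gives the opposite inequality. Complete surfaces with $K\le0$ can even fail the weak maximum principle; for example, rotational models with $K\le-r^{2+\varepsilon}$ at infinity are stochastically incomplete.

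For an entire minimal graph the principle does hold, but for a different reason: the graph is properly embedded. One applies the Pigola--Rigoli--Setti criterion to $\gamma=|\psi|^2$, which tends to $+\infty$ at infinity and satisfies $|\nabla\gamma|\le2\sqrt\gamma$ and, by minimality, $\Delta\gamma=4$.
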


Many proofs of Theorem~\ref{thm:Bernstein} are known; see for instance
\cite{Osserman-book, Heinz, Chern, dCP, FCS, Pogorelov} and the references therein. In this note
we show that it follows from a single application of the Omori--Yau maximum principle
\cite{Omori, Yau} to one explicit function. We prove the following result.

\begin{theorem}\label{thm:main}
Let \textcolor{black}{$\psi\colon\Sigma\to\R^{3}$} be a minimal immersion of a connected, oriented surface with
Gauss map $N$, and assume that the Omori--Yau maximum principle holds on $\Sigma$ with its
induced metric. 
If there is a unit vector $\a\in\R^{3}$ such that $\nu:=\ip{N}{\a}>0$ on $\Sigma$, then $\psi(\Sigma)$ is contained in a plane.
\end{theorem}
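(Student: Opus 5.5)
The plan is to find a single function on $\Sigma$ whose logarithm is strictly subharmonic wherever the surface is not flat, and then let the Omori--Yau principle rule out any positive supremum. Where $A\neq 0$, the principal curvatures of the minimal surface are $\pm\lambda$ with $\lambda\ge 0$, so $|A|^2=2\lambda^2=-2K$. I will use two classical pointwise identities for minimal surfaces in $\R^3$.

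The first is the Jacobi equation for the angle function,
\[
\Delta\nu=-|A|^2\nu=-2\lambda^2\nu .
\]
Since $\nu>0$, this gives $\Delta\log\nu=-2\lambda^2-|\nabla\log\nu|^2$.

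The second holds on the open set $\{\lambda>0\}$. There the metric $-K\,\langle\cdot,\cdot\rangle$ is the pull-back of the round metric by $N$, so it has curvature $1$. The conformal change formula then gives
\[
\Delta\log\lambda=2K=-2\lambda^2 .
\]

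Combining these for a parameter $p>1$, I will consider $F=\lambda/\nu^{p}\ge 0$. On $\{\lambda>0\}$ it satisfies
\[
\Delta\log F=2(p-1)\lambda^2+p\,|\nabla\log\nu|^2\ \ge\ 2(p-1)\lambda^2 .
\]
Consequently
\[
\Delta F\ \ge\ F\,|\nabla\log F|^2+2(p-1)\lambda^2F .
\]
The exponent $p>1$ is chosen precisely to produce the strict term $2(p-1)\lambda^2$, which the case $p=1$ lacks. The inequality extends across the zeros of $\lambda$ in the weak sense, or one may simply work with $F^2=\lambda^2/\nu^{2p}$, which is smooth.

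Because $F$ need not be bounded, I will apply Omori--Yau to the bounded function $G=F/(1+F)\in[0,1)$. Suppose $\sup G>0$. Omori--Yau gives a sequence $x_k$ with $G(x_k)\to\sup G$, $|\nabla G|(x_k)\to 0$ and $\Delta G(x_k)\le 1/k$. Along this sequence $F(x_k)$ stays in a compact subinterval of $(0,\infty)$ when $\sup G<1$. In that case $\nabla F(x_k)\to 0$ as well, and
\[
\Delta G=\frac{\Delta F}{(1+F)^2}-\frac{2|\nabla F|^2}{(1+F)^3}
\]
reduces to $\Delta F(x_k)\lesssim 1/k$. The differential inequality then gives $2(p-1)\lambda^2F\to 0$ at $x_k$, hence $\lambda(x_k)\to 0$. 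The goal is to contradict $F(x_k)$ being bounded away from $0$, which would yield $\sup G=0$. Then $\lambda\equiv 0$, so $N$ is constant and $\psi(\Sigma)$ lies in a plane orthogonal to $N$; connectedness is used here.

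The main obstacle is exactly this last contradiction. From $\lambda(x_k)\to 0$ and $F(x_k)\ge c>0$ one only gets $\nu(x_k)^p\le \lambda(x_k)/c\to 0$, which is not absurd, since $\nu>0$ is not bounded away from zero. The same lack of control appears in the case $\sup G=1$, where $F(x_k)\to\infty$.

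To overcome this I will first try to replace $F$ by a composite that builds in the weight $\nu$ more strongly, for instance
\[
\tilde F=\frac{\lambda}{\nu^{p}}\,\varphi(\nu),\qquad\text{or}\qquad G=-\Bigl(1+\frac{\lambda^2}{\nu^{2p}}\Bigr)^{-\varepsilon},
\]
and then tune $p$ and $\varepsilon$. The aim is that the leftover term in $\Delta G$ controls $\lambda^2$ times a quantity that stays bounded below along any Omori--Yau sequence with $G$ near its supremum, and that the gradient terms $|\nabla\log F|^2$ and $|\nabla\log\nu|^2$ are absorbed by the good sign they already carry. Checking that such a choice closes the argument, including the case $F(x_k)\to\infty$, is the step I expect to require the real work.
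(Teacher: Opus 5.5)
There is a genuine gap. Your overall strategy, applying Omori--Yau once to a bounded function of curvature divided by a weight in $\nu$, is the paper's. But the proposal stops exactly where the proof has to happen, and with the weight $\nu^{p}$ it cannot be completed for any $p$ or any outer function. The information you need was lost when you treated $p|\nabla\log\nu|^{2}$ as a mere good-sign gradient term. Since $\nabla\nu=-A(\vec{a}^{\top})$ and $A^{2}=\lambda^{2}\,\mathrm{Id}$, one has $|\nabla\nu|^{2}=\lambda^{2}(1-\nu^{2})$, which is a zero-order curvature term. Keeping it,
\[
\Delta\log\frac{\lambda}{\nu^{p}}=\lambda^{2}\Big(\frac{p}{\nu^{2}}+p-2\Big)=\Big(\frac{\lambda}{\nu^{p}}\Big)^{2}\nu^{2p-2}\big(p+(p-2)\nu^{2}\big).
\]
The coefficient of $F^{2}$ behaves as follows. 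For $p>1$ it tends to $0$ as $\nu\to0$. For $p=1$ it is $1-\nu^{2}$, which tends to $0$ as $\nu\to1$; so $p=1$ does have a strict term $F^{2}(1-\nu^{2})$, contrary to what you say. For $p<1$ it is negative near $\nu=1$. An Omori--Yau sequence gives no control on $\nu(x_k)$, so you cannot obtain $\Delta\log F\ge c\,F^{2}$ with a constant $c>0$. This is precisely the obstruction you found. Composing with $-(1+F^{2})^{-\varepsilon}$ only changes the gradient terms; the zero-order term keeps the same degenerate factor in $\nu$.

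The missing idea is to change the weight. Also use $\Delta\log(1+\nu)=-\lambda^{2}$, another consequence of $|\nabla\nu|^{2}=\lambda^{2}(1-\nu^{2})$, and take $\rho=\nu(1+\nu)$. This is your $\tilde F$ with $\varphi(\nu)=\nu^{p-1}/(1+\nu)$. The smooth function $f=\lambda^{2}/\rho^{2}$ then satisfies, where $A\neq0$,
\[
\Delta\log f=\frac{2\lambda^{2}}{\nu^{2}}=2(1+\nu)^{2}f\ \ge\ 2f,
\]
so $\nu$ drops out. The paper then takes $F=(1+f)^{-1/2}\in(0,1]$. The chain rule gives $\tfrac12F^{4}\Delta f=3|\nabla F|^{2}-F\Delta F$. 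Combined with $\Delta f\ge f\,\Delta\log f$, and with $\Delta f\ge0$ at the zeros of $f$ (which are minima), this yields $(1-F^{2})^{2}\le3|\nabla F|^{2}-F\Delta F$ on all of $\Sigma$. Along an Omori--Yau sequence for $\inf F$ the right-hand side is $<3/k^{2}+1/k$, which forces $\inf F=1$, i.e.\ $A\equiv0$. This handles both of your cases ($f$ bounded and $f\to\infty$) at once: $f\to\infty$ means $F\to0$, and then the left-hand side tends to $1$.

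The exponent matters too. If, for smoothness, you square your $F$ inside $G$ and use $f/(1+f)$, you are in the case $\beta=1$ of the paper's remark on $(1+f)^{-\beta}$. There the Omori--Yau sequence does not exclude $\sup f=\infty$.
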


The Omori--Yau principle holds on every properly immersed minimal surface of $\R^{3}$, and in
particular on every entire minimal graph (Proposition~\ref{prop:OY-graphs}). \textcolor{black}{An entire graph of the form $z=u(x,y)$, with $u\in C^{2}(\R^{2})$, also has $\ip{N}{\vec{e}_3}>0$, since
\[
N(x,y)=\frac{1}{\sqrt{1+|D u|^{2}}}\left(-\tfrac{\partial u}{\partial x}, -\tfrac{\partial u}{\partial y}, 1 \right).
\]}
Theorem~\ref{thm:Bernstein} therefore follows at once from
Theorem~\ref{thm:main}.

The proof rests on the function
\begin{equation}\label{eq:F-intro}
F=\frac{\rho}{\sqrt{\rho^{2}+\kappa^{2}}}\,,\qquad \rho=\nu(1+\nu),\qquad
\kappa^{2}=-K=\tfrac12|A|^{2},
\end{equation}
where $K$ is the Gauss curvature and $|A|$ the norm of the second fundamental form. The
function $F$ is smooth and takes values in $(0,1]$, and $F=1$ exactly at the points where
$A$ vanishes. We show that $F$ satisfies the pointwise inequality
\begin{equation}\label{eq:key-intro}
(1-F^{2})^{2}\ \le\ 3|\nabla F|^{2}-F\,\Delta F,
\end{equation}
and the Omori--Yau principle then yields $F\equiv 1$. This type of argument goes back to Cheng and
Yau \cite{CY}. Apart from the Omori--Yau principle, every ingredient is a
pointwise identity of surface theory.

On the other hand, and as a generalization of Bernstein's theorem, Nirenberg conjectured that a complete minimal surface in $\R^{3}$ whose normals omit a
neighbourhood of some direction must be a plane. Osserman proved this conjecture in
\cite{Osserman59}.

\begin{theorem}[Osserman]\label{thm:Osserman}
Let $\psi\colon\Sigma\to\R^{3}$ be a complete minimal immersion of a connected, oriented
surface with Gauss map $N$. If $N(\Sigma)$ omits a nonempty open subset of $\Sf^{2}$, then
$\psi(\Sigma)$ is a plane.
\end{theorem}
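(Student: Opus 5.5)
Throughout, let $\Omega$ be the nonempty open set of $\Sf^{2}$ omitted by $N$. I would first reduce to a situation close to Theorem~\ref{thm:main}. Choose a geodesic disc of $\Sf^2$ inside $\Omega$, and let $-\a$ be its centre. Then there is $\delta\in(0,1)$ with $\nu=\ip{N}{\a}\ge -1+\delta$ on $\Sigma$. The hypothesis is therefore a one-sided bound on the angle function, weaker than $\nu>0$. The plan is to replace $\rho=\nu(1+\nu)$ in \eqref{eq:F-intro} by $\rho=f(\nu)$, for a smooth function $f$ positive on $[-1+\delta,1]$. A natural first try is $f(\nu)=(\nu+1-\delta)(2-\nu)$ or, after the Möbius-type substitution, $f(\nu)=\frac{\nu+1-\delta'}{\ldots}$, with $\delta'<\delta$. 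I would then redo the pointwise computation for $F=\rho/\sqrt{\rho^2+\kappa^2}$. It uses only $\Delta\nu=-|A|^2\nu$, the formula $|\nabla\nu|^2=\kappa^2(1-\nu^2)$, and the Simons-type identity for $\Delta\log\kappa$ on minimal surfaces (namely $\Delta\log\kappa=2K$ away from umbilics). The goal is an inequality of the form
\[
c\,(1-F^{2})^{2}\le C|\nabla F|^{2}-F\,\Delta F ,
\]
with constants $c,C>0$ depending only on $\delta$. Choosing $f$ so that the first-order terms in $\nabla\nu$ combine into a perfect square is exactly where the specific form $\nu(1+\nu)$ mattered before. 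I would determine $f$ by imposing that condition as an ODE inequality on $f$ over $[-1+\delta,1]$.

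The extra work is that Omori--Yau is no longer available for free. $\Sigma$ is only complete, not proper, and its curvature $K=-\kappa^2$ need not be bounded below. To fix this, I would pass to the conformal metric
\[
\tilde g=\Bigl(1+\frac{\kappa^2}{\rho^2}\Bigr)g=F^{-2}g .
\]
Since $\tilde g\ge g$, it is complete. Its Gauss curvature is
\[
\tilde K=F^{2}\bigl(K+\Delta\log F\bigr),
\]
and the term $F^2K=-\kappa^2\rho^2/(\rho^2+\kappa^2)\ge-\rho^2$ is bounded below. It then remains to bound $F^2\Delta\log F$ from below, using the same pointwise identities. If this works, $\tilde g$ is complete with curvature bounded below, so the Omori--Yau principle holds on $(\Sigma,\tilde g)$.

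Since $\tilde\Delta=F^{2}\Delta$ and $|\tilde\nabla F|^2=F^2|\nabla F|^2$, the differential inequality transfers to $\tilde g$ up to harmless factors of $F$. This gives a Cheng--Yau-type inequality for $F$, or for a power $F^{p}$, with respect to $\tilde g$. Applying Omori--Yau along a minimizing sequence of $F$ (which takes values in $(0,1]$) would force $\inf F=1$. Hence $A\equiv0$, so $\psi(\Sigma)$ lies in a plane, and completeness makes it the whole plane.

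The main obstacle I expect is the curvature lower bound for $\tilde g$. The differential inequality for $F$ controls $\Delta F$ from the wrong side for this purpose, so $\Delta\log F$ must be computed directly. If it cannot be bounded below, my fallback is to choose a different conformal factor. One option is $\tilde g=(1+\kappa^2/\rho^2)^{s}g$ with $s\in(0,1)$. Another is a factor built from $\kappa^2$ and $1+\nu$ only, for which $\Delta\log$ is explicit through $\Delta\log\kappa=2K$ and $\Delta\nu=-2\kappa^2\nu$. The exponent $s$ and the weight in $\nu$ would then be tuned so that both the completeness and the curvature bound hold.
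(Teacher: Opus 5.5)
Your reduction to $\nu\ge-1+\delta$ and the closing covering argument are fine, but both load-bearing steps are missing or would fail. First, the key inequality is never established: $\rho$ is left undetermined, and your explicit candidate $\rho=(\nu+1-\delta)(2-\nu)$ does not work. Write $h=\kappa^2/\rho^2$, so $F=(1+h)^{-1/2}$. The mechanism is $\Delta h\ge h\,\Delta\log h$, which requires $\Delta\log h\ge\epsilon\,h$ for some $\epsilon>0$. For any constant $b$ one has $\Delta\log|\nu-b|=-\kappa^2\left(1+\frac{1-b^2}{(\nu-b)^2}\right)$. Combined with $\Delta\log\kappa^2=-4\kappa^2$, your candidate gives
\[
\Delta\log h=2\kappa^2\left(\frac{\delta(2-\delta)}{(\nu+1-\delta)^2}-\frac{3}{(2-\nu)^2}\right),
\]
which equals $2\kappa^2\left(\frac{\delta}{2-\delta}-3\right)<0$ at $\nu=1$. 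So the argument breaks down near points where $\nu$ is close to $1$. What matters is not a perfect square in the first-order terms; the paper simply drops $|\nabla\log h|^2\ge0$. What matters is the zeroth-order cancellation of the $-4\kappa^2$ coming from $\log\kappa^2$. Each factor $|\nu-b|$ of $\rho$ returns $2\kappa^2+2\kappa^2(1-b^2)/(\nu-b)^2$, so both factors need $|b|\le1$. The paper's choice $\rho_c=(\nu-c)(1+\nu)$ gives exactly $\Delta\log f_c=2(1+\nu)^2f_c$, and hence Lemma~\ref{lem:key2} with constant $(1+c)^2$.

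Second, the Omori--Yau step through $\tilde g=F^{-2}g=(1+h)g$ does not go through. On $\{A\neq0\}$,
\[
F^2\Delta\log F=-\frac{h\,\Delta\log h}{2(1+h)^2}-\frac{2\,|\nabla(\kappa/\rho)|^2}{(1+h)^3}.
\]
The last term involves $\nabla\kappa$, i.e.\ $\nabla A$, which admits no a priori pointwise bound in terms of $\kappa$ and $\nu$, so nothing makes $\tilde K$ bounded below. Moreover, even granting Omori--Yau on $(\Sigma,\tilde g)$, the factors of $F$ are not harmless. Because $F^{-2}$ is unbounded, the principle does not pass back to $g$. In $\tilde g$, an inequality $a(1-F^2)^2\le C|\nabla F|^2-F\Delta F$ becomes $aF^2(1-F^2)^2\le C|\tilde\nabla F|_{\tilde g}^2-F\tilde\Delta F$, so a minimizing sequence only yields $\inf F\in\{0,1\}$. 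The case $\inf F=0$ is precisely the one that must be excluded (compare Remark~\ref{rem:exponent}), and passing to a power $F^p$ still leaves a weight that tends to $0$ with $F$. The missing idea is a conformal factor with no $\kappa$ in it. Since $\Delta\log(1+\nu)=-\kappa^2=K$, the metric $\widehat g=(1+\nu)^2g$ is exactly flat, and $\delta\le1+\nu\le2$ makes it complete and uniformly equivalent to $g$. Omori--Yau then holds on $\widehat g$ and transfers to $g$ with only constant losses (Proposition~\ref{prop:OY-complete}), so the key inequality can be used directly in the induced metric.
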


Theorem~\ref{thm:Osserman} contains Bernstein's theorem, since the Gauss image of an entire
graph lies in an open hemisphere, and it was the starting point of the study of the image of the Gauss map of complete minimal surfaces: the Gauss image of a complete
nonflat minimal surface omits at most six points \cite{Xavier}, and in
fact at most four points \cite{Fujimoto}, which is sharp. See \cite{Osserman-book} for the
classical approach through the Weierstrass representation and complex analysis.

As a by-product of our approach in this paper, we show that the same method, with a different function and with some extra work, also proves Theorem~\ref{thm:Osserman} in a simple and self-contained way. The core of the argument is the following result.

\begin{theorem}\label{thm:main2}
Let $\psi\colon\Sigma\to\R^{3}$ be a minimal immersion of a connected, oriented surface with
Gauss map $N$, and assume that the Omori--Yau maximum principle holds on $\Sigma$ with its
induced metric. If there are a unit vector $\a\in\R^{3}$ and a constant $c\in(-1,1)$ such that
\[
\nu:=\ip{N}{\a}>c\qquad\text{on }\Sigma,
\]
then $\psi(\Sigma)$ is contained in a plane.
\end{theorem}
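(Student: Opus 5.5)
Following the strategy of Theorem~\ref{thm:main}, we replace $\rho=\nu(1+\nu)$ by a positive function of $\nu$ adapted to the weaker hypothesis $\nu>c$, and set
\[
F=\frac{\rho}{\sqrt{\rho^{2}+\kappa^{2}}},\qquad \rho=\varphi(\nu),\qquad \kappa^2=\tfrac12|A|^2 .
\]
The natural choice is $\varphi(\nu)=(\nu-c)^{\alpha}(1+\nu)^{\beta}$, or more generally a product of powers of $\nu-c$ and $1+\nu$. This $\varphi$ is positive on $(c,1]$, so $F$ is smooth with values in $(0,1]$, and $F=1$ exactly where $A=0$. The aim is again a pointwise inequality
\[
\delta(1-F^2)^2\le C|\nabla F|^2-F\Delta F,
\]
with constants $\delta,C>0$ depending only on $c$. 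Once this holds, Omori--Yau applied to $F$ (bounded below) gives a sequence $p_k$ with $F(p_k)\to\inf F$, $|\nabla F|(p_k)\to0$ and $\Delta F(p_k)\ge -1/k$. Therefore $\delta(1-(\inf F)^2)^2\le 0$, so $\inf F=1$, hence $A\equiv0$, and connectedness gives a plane.

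The pointwise ingredients are the standard identities on a minimal surface. First, $\Delta\nu=-|A|^2\nu=-2\kappa^2\nu$ and $|\nabla\nu|^2=\kappa^2(1-\nu^2)$, the latter because $\nabla\nu=-A(\a^T)$ with $|\a^T|^2=1-\nu^2$ and $A$ having eigenvalues $\pm\kappa$. Second, where $\kappa>0$, $\log\kappa$ is harmonic (Simons-type identity), equivalently $\kappa\Delta\kappa=|\nabla\kappa|^2+2\kappa^4$. Writing $F=(1+e^{2h})^{-1/2}$ with $h=\log\kappa-\log\rho$, we compute
\[
\Delta h=-\Delta\log\varphi(\nu)=-\Big[\frac{\varphi'}{\varphi}\Delta\nu+\Big(\frac{\varphi'}{\varphi}\Big)'|\nabla\nu|^2\Big]
=\kappa^2\Big[2\nu\frac{\varphi'}{\varphi}-(1-\nu^2)\Big(\frac{\varphi'}{\varphi}\Big)'\Big].
\]
The bracket should be bounded below by a positive multiple of $\rho^2$-type quantities. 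Expressing $|\nabla F|^2$ and $F\Delta F$ through $|\nabla h|^2$ and $\Delta h$, the $|\nabla h|^2$ terms are absorbed by choosing $C$ large, precisely as in the derivation of \eqref{eq:key-intro}. What remains is $(1-F^2)^2\lesssim F^2(1-F^2)\kappa^{-2}\rho^2\cdot\kappa^2[\cdots]/\rho^2$. Since $1-F^2=\kappa^2/(\rho^2+\kappa^2)$ and $F^2=\rho^2/(\rho^2+\kappa^2)$, this reduces to the scalar inequality
\[
2\nu\frac{\varphi'}{\varphi}-(1-\nu^2)\Big(\frac{\varphi'}{\varphi}\Big)'\ \ge\ \delta\,\varphi^2\quad\text{on }(c,1].
\]
At points where $\kappa=0$ we have $F=1$, and the inequality is trivial or handled by continuity.

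The main obstacle is choosing $\varphi$ so that this one-variable inequality holds on all of $(c,1]$, in particular for negative $\nu$ when $c<0$. There the term $2\nu\varphi'/\varphi$ is negative and must be dominated by $-(1-\nu^2)(\varphi'/\varphi)'$. With $\varphi=(\nu-c)^\alpha$ we get $(\varphi'/\varphi)'=-\alpha/(\nu-c)^2$, so the left side becomes
\[
\frac{\alpha}{(\nu-c)^2}\big[2\nu(\nu-c)+1-\nu^2\big]=\frac{\alpha\,(1+\nu^2-2c\nu)}{(\nu-c)^2}.
\]
Here $1+\nu^2-2c\nu\ge 1-c^2>0$, so the left side is at least $\alpha(1-c^2)/4$ on $(c,1]$. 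The right side $\delta(\nu-c)^{2\alpha}$ is bounded by $\delta 2^{2\alpha}$, so a small $\delta$ works. I would therefore take $\varphi=\nu-c$, and then recheck that the gradient-absorption step is compatible with the resulting constants. I expect this to be the delicate part, together with the bookkeeping near the zeros of $\kappa$ (which are isolated unless the surface is planar).
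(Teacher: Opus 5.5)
There is a genuine gap, and it lies in the identity you take as standard: $\log\kappa$ is \emph{not} harmonic on a minimal surface. The correct identity is Lemma~\ref{lem:logkappa}, $\Delta\log\kappa^{2}=-4\kappa^{2}$, i.e.\ $\Delta\log\kappa=-2\kappa^{2}$, or equivalently $\kappa\Delta\kappa=|\nabla\kappa|^{2}-2\kappa^{4}$. Your two formulations are not even equivalent to each other: the second one gives $\Delta\log\kappa=+2\kappa^{2}$. The correct identity holds because $N\colon(U,\kappa^{2}g)\to\Sf^{2}$ is a local isometry. You can also check it on the catenoid, where $\kappa^{2}=\cosh^{-4}t$ and $\Delta\log\kappa^{2}=-4\cosh^{-4}t$.

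Consequently $\Delta h=-2\kappa^{2}-\Delta\log\varphi(\nu)$. Since $3|\nabla F|^{2}-F\Delta F=\tfrac12F^{4}\Delta f\ge F^{4}f\,\Delta h=(1-F^{2})^{2}\varphi^{2}\,\Delta h/\kappa^{2}$, the scalar condition you actually need is
\[
\varphi^{2}\Big[-2+2\nu\frac{\varphi'}{\varphi}-(1-\nu^{2})\Big(\frac{\varphi'}{\varphi}\Big)'\Big]\ \ge\ \delta>0\qquad\text{on }(c,1].
\]
Note also the power of $\varphi$: the requirement is $\varphi^{2}[\,\cdot\,]\ge\delta$, not $[\,\cdot\,]\ge\delta\varphi^{2}$.

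For your choice $\varphi=\nu-c$ the left-hand side equals $(1-c^{2})-(\nu-c)^{2}$. At $\nu=1$ this is $2c(1-c)$, so the condition fails for every $c\le0$. That is exactly the range that matters. The case $c=0$ is Bernstein's theorem, and the passage to Osserman's theorem needs $c$ close to $-1$, where the bracket is negative on most of $(c,1]$. There $\Delta h$ can be negative, and no inequality of the form $\delta(1-F^{2})^{2}\le C|\nabla F|^{2}-F\Delta F$ follows. So the delicate point is not the gradient absorption: with $F=(1+f)^{-1/2}$ and $C=3$ the identity is exact, and the $|\nabla h|^{2}$ term has a good sign. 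The delicate point is this curvature term.

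The missing idea is the factor $1+\nu$ in $\rho$. Since $\Delta\log(1+\nu)=-\kappa^{2}$ exactly (Lemma~\ref{lem:logs}), this factor contributes $+1$ to the bracket. The factor $\nu-c$ contributes $1+(1-c^{2})/(\nu-c)^{2}$ (Lemma~\ref{lem:logs2}). Together they cancel the $-2$. For $\varphi=(\nu-c)(1+\nu)$ the bracket is therefore $(1-c^{2})/(\nu-c)^{2}$, and
\[
\varphi^{2}[\,\cdot\,]=(1-c^{2})(1+\nu)^{2}\ge(1-c^{2})(1+c)^{2}>0 .
\]
This is the paper's $\rho_c$. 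The constant $1-c^{2}$ in $f_c$ is only a normalization that gives $\Delta\log f_c=2(1+\nu)^{2}f_c$ in \eqref{eq:laplalog2}. This choice yields \eqref{eq:key2}, and your Omori--Yau endgame then goes through verbatim.

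Within your family of pure powers you could also succeed with $(\nu-c')^{\alpha}$, where $c'<c$ and $\alpha>1-c'$, using that $\nu-c'\ge c-c'>0$. But $\alpha=1$ does not work. Finally, zeros of $\kappa$ need no isolation argument: at such points $f$ attains its minimum $0$, so $\Delta f\ge0$ and the inequality holds trivially.
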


The hypothesis of Theorem~\ref{thm:main2} says that the Gauss image lies in the open spherical
cap $D_c=\{q\in\Sf^{2}:\ip{q}{\a}>c\}$. For $c=0$ this is an open hemisphere and Theorem~\ref{thm:main2} is Theorem~\ref{thm:main}. Every nonempty open subset of $\Sf^{2}$ contains a closed
spherical cap, so if $N(\Sigma)$ omits an open set, then $N(\Sigma)$ lies in some $D_c$ with
$c\in(-1,1)$.

The proof of Theorem~\ref{thm:main2} rests on the function
\begin{equation}\label{eq:F-intro2}
F_c=\frac{\rho_c}{\sqrt{\rho_c^{2}+(1-c^{2})\kappa^{2}}}\,,\qquad \rho_c=(\nu-c)(1+\nu),
\end{equation}
with $F_0=F$ when $c=0$. The function
$F_c$ is smooth, takes values in $(0,1]$, and $F_c=1$ exactly at the points where $A$ vanishes. As in the case where $c=0$, we
show that $F_c$ satisfies the pointwise inequality
\begin{equation}\label{eq:key-intro2}
(1+c)^{2}(1-F_c^{2})^{2}\ \le\ 3|\nabla F_c|^{2}-F_c\,\Delta F_c,
\end{equation}
and the Omori--Yau principle then yields $F_c\equiv1$.

To deduce Theorem~\ref{thm:Osserman} from Theorem~\ref{thm:main2} we need the Omori--Yau
principle on a complete minimal surface whose Gauss image lies in $D_c$. This is the content of
Proposition~\ref{prop:OY-complete}. It is the only place where completeness is used, and also
the only place where an auxiliary metric appears: the metric $(1+\nu)^{2}g$, which is flat and
complete, so that the classical theorem of Omori and Yau applies to it. For properly immersed
surfaces no auxiliary metric is needed at all (Corollary~\ref{cor:proper}).

Section~\ref{sec:prelim} fixes notation and recalls the Omori--Yau principle.
Section~\ref{sec:identities} collects the pointwise identities, Section~\ref{sec:F}
introduces $F$ and proves \eqref{eq:key-intro}, and Section~\ref{sec:proofs} proves
Theorems~\ref{thm:main} and~\ref{thm:Bernstein}. Finally, Section~\ref{sec:Osserman} proves Theorem~\ref{thm:main2} and Theorem~\ref{thm:Osserman}.

\section{Preliminaries}\label{sec:prelim}

\subsection{Notation}
Let $\psi\colon\Sigma\to\R^{3}$ be an immersion of a connected oriented surface. We give
$\Sigma$ the induced metric $g=\ip{\ }{\ }$, with Levi-Civita connection $\nabla$,
gradient $\nabla$, Laplacian $\Delta=\tr\Hess=\dv\nabla$, and we write $\overline{\nabla}$ for the flat
connection of $\R^{3}$. Let $N\colon\Sigma\to\Sf^{2}$ be the Gauss map and let \textcolor{black}{$AX=-\overline{\nabla}_XN$ be
the shape operator, defined for every tangent vector field $X\in\mathfrak{X}(\Sigma)$. The Gauss formula and the Codazzi equation read, respectively,}
\[
\overline{\nabla}_XY=\nabla_XY+\ip{AX}{Y}N,\qquad (\nabla_XA)Y=(\nabla_YA)X, \qquad \textcolor{black}{\text{for } X,Y\in\mathfrak{X}(\Sigma),}
\]
and the Gauss equation gives $K=\det A$. From now on the immersion is \emph{minimal}, that
is, $\tr A=0$. Its principal curvatures are then $\pm\kappa$ with $\kappa\ge0$, and
\begin{equation}\label{eq:kappa}
\kappa^{2}=-K=\tfrac12|A|^{2}.
\end{equation}
Note that $\kappa^{2}$ is a smooth function, whereas $\kappa$ itself is in general only
continuous. Only $\kappa^{2}$ will enter the computations.

We fix a unit vector \textcolor{black}{$\a\in\R^{3}$ and consider the \emph{angle function} $\nu=\ip{N}{\a}$ and
the tangent part $\a^{\top}=\a-\nu N\in\mathfrak{X}(\Sigma)$ of $\a$, so that}
\begin{equation}\label{eq:norma}
\textcolor{black}{|\a^{\top}|^{2}=1-\nu^{2}.}
\end{equation}
We shall use repeatedly the chain rule
\begin{equation}\label{eq:chain}
\nabla\phi(u)=\phi'(u)\nabla u,\qquad
\Delta\phi(u)=\phi'(u)\Delta u+\phi''(u)|\nabla u|^{2}.
\end{equation}

\subsection{The Omori--Yau maximum principle}

\begin{definition}\label{def:OY}
We say that the \emph{Omori--Yau maximum principle} holds on a Riemannian manifold $M$ if,
for every $u\in C^{2}(M)$ bounded from below, there is a sequence $\{p_k\}\subset M$ such
that
\[
u(p_k)<\inf_M u+\frac1k,\qquad |\nabla u(p_k)|<\frac1k,\qquad \Delta u(p_k)>-\frac1k .
\]
\end{definition}

Omori \cite{Omori} and Yau \cite{Yau} proved that the Omori--Yau maximum principle holds on
every complete Riemannian manifold whose Ricci curvature is bounded from below. For minimal
surfaces in $\R^{3}$ no such bound is available a priori. In the properly immersed case we use the following
well-known fact.

\begin{proposition}[\cite{PRS-memoirs, AMR}]\label{prop:OY-graphs}
The Omori--Yau maximum principle holds on every properly immersed minimal surface of $\R^3$ with its induced metric. In particular, it holds on every entire minimal graph in $\R^3$.
\end{proposition}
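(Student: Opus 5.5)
The plan is to use the classical route of Pigola--Rigoli--Setti: the Omori--Yau principle holds on $M$ as soon as there is a proper function $\gamma\in C^{2}(M)$ with $\gamma\to+\infty$ at infinity, $|\nabla\gamma|\le G(\gamma)$ and $\Delta\gamma\le G(\gamma)$ outside a compact set, where $G>0$ is smooth and satisfies $1/G\notin L^{1}(+\infty)$ (for instance $G$ linear). For a properly immersed minimal surface $\psi\colon\Sigma\to\R^{3}$ I take $\gamma=|\psi|^{2}$, or $\gamma=1+|\psi|^{2}$ to keep it positive. Properness of $\psi$ means exactly that $\gamma$ is proper on $\Sigma$.

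The needed estimates are pointwise. Since $\nabla\gamma=2\psi^{\top}$, we get $|\nabla\gamma|\le 2|\psi|\le 2\sqrt{\gamma}$. By minimality $\Delta\psi=2HN=0$, so $\Delta|\psi|^{2}=2|\nabla\psi|^{2}+2\ip{\psi}{\Delta\psi}=4$. Both bounds are therefore dominated by $G(t)=4+2\sqrt t$, or simply by $G(t)=Ct$ for $t\ge1$, and $\int^{\infty}dt/G=\infty$.

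With this in hand, I would still prove the principle directly rather than cite the general theorem, to keep things self-contained. Take $u\in C^{2}(\Sigma)$ bounded below, fix $k$, and choose $x_k$ with $u(x_k)<\inf u+1/k$. Consider $u_\varepsilon=u+\varepsilon\log\gamma$ with $\varepsilon>0$ small. Since $\gamma$ is proper and $\log\gamma\to\infty$, $u_\varepsilon$ attains a minimum at some $p_k$. At $p_k$ we have
\[
\nabla u=-\varepsilon\nabla\gamma/\gamma,\qquad \Delta u\ge-\varepsilon\Delta\log\gamma=-\varepsilon\bigl(\Delta\gamma/\gamma-|\nabla\gamma|^{2}/\gamma^{2}\bigr)\ge-4\varepsilon/\gamma .
\]
Hence $|\nabla u(p_k)|\le 2\varepsilon/\sqrt{\gamma}\le2\varepsilon$ and $\Delta u(p_k)\ge -4\varepsilon$, using $\gamma\ge1$.

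For the value estimate, $u(p_k)\le u_\varepsilon(p_k)\le u_\varepsilon(x_k)=u(x_k)+\varepsilon\log\gamma(x_k)$ because $\log\gamma\ge0$. Choosing $\varepsilon$ small relative to $\log\gamma(x_k)$ and $1/k$ makes all three inequalities hold. This step of balancing $\varepsilon$ against the fixed point $x_k$ is the only delicate point, and it is handled as described. The graph case then follows, since an entire graph over $\R^{2}$ is properly embedded: its projection to $\R^{2}$ is a homeomorphism.
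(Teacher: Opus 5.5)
Your proof is correct, but it closes the key step differently from the paper. The paper stops where your first paragraph ends. It checks that $\gamma=|\psi|^{2}$ is proper with $|\nabla\gamma|\le 2\sqrt{\gamma}$ and $\Delta\gamma=4$, and then cites the function-theoretic criterion \cite[Theorem 1.9]{PRS-memoirs} as a black box.

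You instead prove the principle directly. The function $w=\log(1+|\psi|^{2})$ is a nonnegative proper exhaustion with $|\nabla w|\le 1$ and $\Delta w\le 4$. Minimizing $u+\varepsilon w$ then yields the Omori--Yau sequence, provided $\varepsilon=\varepsilon_k$ is chosen \emph{after} the almost-minimizing point $x_k$, so that $\varepsilon\, w(x_k)<\inf u+1/k-u(x_k)$ and $4\varepsilon<1/k$. All three strict inequalities check out, and the graph case is handled correctly.

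What your route buys is genuine self-containedness: the proof of Bernstein's theorem no longer depends on the Pigola--Rigoli--Setti memoir, which fits the paper's stated aim. What the citation buys is brevity and the general framework (properly immersed submanifolds of any dimension and codimension with bounded mean curvature). Your penalization adapts to bounded mean curvature as well, since $\Delta\log(1+|\psi|^{2})$ then stays bounded above.

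One small caveat: your paraphrase of the general criterion in the first paragraph is not quite the statement proved in \cite{PRS-memoirs}, whose hypotheses on $\gamma$ and on the growth function $G$ are of a somewhat different form, including a monotonicity condition. This is harmless, because your direct argument never uses that criterion.
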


This is a special case of the theorem of Pigola, Rigoli and Setti asserting that the
Omori--Yau maximum principle holds on properly immersed submanifolds of Euclidean space with
bounded mean curvature \cite{PRS-memoirs, AMR}. \textcolor{black}{It is obtained from their function-theoretic
criterion in \cite[Theorem 1.9]{PRS-memoirs} applied to $\gamma=|\psi|^{2}$. The function $\gamma$ satisfies $|\nabla\gamma|=2|\psi^{\top}|\le2\sqrt{\gamma}$ and, since the immersion is proper, it  also satisfies $\gamma\to+\infty$ as $p\to\infty$. Besides, minimality gives $\Delta\gamma=4$}. An entire graph is a closed embedded surface, hence it is properly embedded.

For complete surfaces with Gauss image in a spherical cap we prove the Omori--Yau principle in
Proposition~\ref{prop:OY-complete}.

\section{Pointwise identities}\label{sec:identities}
Throughout this section $\psi\colon\Sigma\to\R^{3}$ is a minimal immersion. All statements
are local.

\begin{lemma}\label{lem:basic}
The following identities hold:
\begin{enumerate}
\item[(a)] $A^{2}=\kappa^{2}\,\mathrm{Id}$;
\item[(b)] $\nabla\nu=-A(\a^{\top})$ and $|\nabla\nu|^{2}=\kappa^{2}(1-\nu^{2})$;
\item[(c)] $\Delta\nu=-|A|^{2}\nu=-2\kappa^{2}\nu$.
\end{enumerate}
\end{lemma}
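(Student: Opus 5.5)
The three identities are local and follow from the Cayley--Hamilton theorem, the Weingarten formula and the Codazzi equation; I would prove them in order (a), (b), (c).

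For (a), $A$ is a self-adjoint endomorphism of a two-dimensional space. Its characteristic polynomial is $\lambda^{2}-(\tr A)\lambda+\det A$, and Cayley--Hamilton gives $A^{2}-(\tr A)A+(\det A)\,\mathrm{Id}=0$. Minimality gives $\tr A=0$, and the Gauss equation together with \eqref{eq:kappa} gives $\det A=K=-\kappa^{2}$. Hence $A^{2}=\kappa^{2}\,\mathrm{Id}$. Taking the trace also recovers $|A|^{2}=\tr A^{2}=2\kappa^{2}$.

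For (b), fix $X\in\mathfrak{X}(\Sigma)$. Since $\a$ is constant,
\[
X(\nu)=\ip{\overline{\nabla}_XN}{\a}=-\ip{AX}{\a}=-\ip{AX}{\a^{\top}}=-\ip{X}{A\a^{\top}},
\]
where the last step uses the symmetry of $A$. So $\nabla\nu=-A(\a^{\top})$. Then
\[
|\nabla\nu|^{2}=\ip{A^{2}\a^{\top}}{\a^{\top}}=\kappa^{2}|\a^{\top}|^{2}=\kappa^{2}(1-\nu^{2}),
\]
using (a) and \eqref{eq:norma}.

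For (c), I first compute the covariant derivative of $\a^{\top}=\a-\nu N$. Differentiating with $\overline{\nabla}$ gives
\[
\overline{\nabla}_X\a^{\top}=-X(\nu)N+\nu AX .
\]
Taking tangent parts via the Gauss formula yields $\nabla_X\a^{\top}=\nu AX$. Now differentiate $\nabla\nu=-A\a^{\top}$:
\[
\nabla_X\nabla\nu=-(\nabla_XA)\a^{\top}-A(\nabla_X\a^{\top})=-(\nabla_XA)\a^{\top}-\nu A^{2}X .
\]
Tracing over a local orthonormal frame $\{e_i\}$, the last term contributes $-\nu\tr A^{2}=-\nu|A|^{2}$.

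The one genuinely non-trivial point is showing that the first term has zero trace; this is where Codazzi enters. Using the self-adjointness of $\nabla_{e_i}A$ and then Codazzi,
\[
\sum_i\ip{(\nabla_{e_i}A)\a^{\top}}{e_i}=\sum_i\ip{(\nabla_{e_i}A)e_i}{\a^{\top}}=\sum_i\ip{(\nabla_{\a^{\top}}A)e_i}{e_i}=\a^{\top}(\tr A)=0,
\]
since $\nabla_{\a^{\top}}$ commutes with the trace and $\tr A=0$. Therefore $\Delta\nu=-|A|^{2}\nu=-2\kappa^{2}\nu$ by (a).
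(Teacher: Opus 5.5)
Your proposal is correct and follows the paper's proof essentially step by step: Cayley--Hamilton for (a), the Weingarten formula and the symmetry of $A$ for (b), and $\nabla_X\a^{\top}=\nu AX$ together with Codazzi and $\tr A=0$ for (c). The only cosmetic difference is that the paper applies Codazzi before tracing, writing $(\nabla_XA)\a^{\top}=(\nabla_{\a^{\top}}A)X$ directly. That makes your intermediate self-adjointness step unnecessary.
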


\begin{proof}
(a) $A$ is a self-adjoint endomorphism of a $2$-dimensional space with $\tr A=0$. By the
Cayley--Hamilton theorem, $A^{2}=-(\det A)\,\mathrm{Id}=-K\,\mathrm{Id}=\kappa^{2}\,\mathrm{Id}$.

\textcolor{black}{(b) For every tangent vector field $X\in\mathfrak{X}(\Sigma)$, 
\[
X(\nu)=\ip{\overline{\nabla}_XN}{\a}=-\ip{AX}{\a}=-\ip{AX}{\a^{\top}}=-\ip{X}{A(\a^{\top})};
\]
in other words, $\nabla\nu=-A(\a^{\top})$. Hence, by (a) and (\ref{eq:norma}),}
\[
\textcolor{black}{|\nabla\nu|^{2}=\ip{A^{2}\a^{\top}}{\a^{\top}}=\kappa^{2}|\a^{\top}|^{2}=\kappa^{2}(1-\nu^{2}).}
\]

\textcolor{black}{(c) From $\a=\a^{\top}+\nu N$ and the fact that $\overline{\nabla}_X\a=0$, we get $\nabla_X\a^\top=\nu A(X)$
for every $X\in\mathfrak{X}(\Sigma)$. Thus, using the Codazzi equation,
\begin{eqnarray*}
\nabla_X(\nabla\nu) & = & -\nabla_X(A(\a^\top))=-(\nabla_XA)(\a^\top)-A(\nabla_X\a^\top)\\
{} & = & -(\nabla_{\a^\top}A)(X)-\nu A^2(X).
\end{eqnarray*}
Let $\{E_1,E_2\}$ be a local orthonormal frame of tangent vector fields on $\Sigma$. Then
\begin{eqnarray*}
\Delta\nu & = & \sum_{i=1}^{2}\ip{\nabla_{E_i}(\nabla\nu)}{E_i}=
-\tr(\nabla_{\a^\top}A)-\nu\tr(A^2)\\
{} & = & -\a^\top(\tr(A))-\nu|A|^2=-\nu|A|^2=-2\kappa^2\nu.
\end{eqnarray*}
This finishes the proof.}
\end{proof}

\begin{lemma}\label{lem:logs}
Wherever $\nu>-1$ we have 
\[
\Delta\log(1+\nu)=-\kappa^{2}.
\]
If moreover $\nu>0$ and $\rho=\nu(1+\nu)$, then
\[
\Delta\log\nu=-\kappa^{2}\,\frac{1+\nu^{2}}{\nu^{2}},\qquad
\Delta\log\rho=-\kappa^{2}\,\frac{1+2\nu^{2}}{\nu^{2}} .
\]
\end{lemma}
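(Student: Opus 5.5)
The plan is to compute each logarithm with the chain rule \eqref{eq:chain}, using Lemma~\ref{lem:basic}(b),(c) to evaluate $\Delta\nu$ and $|\nabla\nu|^{2}$. For a positive function $u$ of $\nu$ we have $\Delta\log u=\Delta u/u-|\nabla u|^{2}/u^{2}$, so everything reduces to substituting $\Delta\nu=-2\kappa^{2}\nu$ and $|\nabla\nu|^{2}=\kappa^{2}(1-\nu^{2})$.

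First, where $\nu>-1$ the function $\log(1+\nu)$ is smooth. Applying the chain rule with $\phi(t)=\log(1+t)$ gives
\[
\Delta\log(1+\nu)=\frac{\Delta\nu}{1+\nu}-\frac{|\nabla\nu|^{2}}{(1+\nu)^{2}}
=\frac{-2\kappa^{2}\nu}{1+\nu}-\frac{\kappa^{2}(1-\nu)(1+\nu)}{(1+\nu)^{2}}.
\]
The two terms share the denominator $1+\nu$, so this equals
\[
-\kappa^{2}\,\frac{2\nu+1-\nu}{1+\nu}=-\kappa^{2}.
\]

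Second, where $\nu>0$ we take $\phi(t)=\log t$. The chain rule gives
\[
\Delta\log\nu=\frac{-2\kappa^{2}\nu}{\nu}-\frac{\kappa^{2}(1-\nu^{2})}{\nu^{2}}
=-\kappa^{2}\,\frac{2\nu^{2}+1-\nu^{2}}{\nu^{2}},
\]
which simplifies to $-\kappa^{2}(1+\nu^{2})/\nu^{2}$.

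Finally, since $\log\rho=\log\nu+\log(1+\nu)$ and the Laplacian is linear, we add the two previous results:
\[
\Delta\log\rho=-\kappa^{2}\Bigl(\frac{1+\nu^{2}}{\nu^{2}}+1\Bigr)=-\kappa^{2}\,\frac{1+2\nu^{2}}{\nu^{2}}.
\]

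There is no real obstacle here. The only points needing care are that the functions are smooth on the stated domains, which is why $\nu>-1$ and $\nu>0$ are required, and that the sign and factor in $|\nabla\nu|^{2}=\kappa^{2}(1-\nu^{2})$ are used correctly so the cancellation in the first identity comes out exactly to $-\kappa^{2}$.
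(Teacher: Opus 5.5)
Your proposal is correct and is essentially the paper's proof: you apply the chain rule \eqref{eq:chain} with $\Delta\nu=-2\kappa^{2}\nu$ and $|\nabla\nu|^{2}=\kappa^{2}(1-\nu^{2})$ from Lemma~\ref{lem:basic} to get the first two identities. You then obtain the third by adding them via $\log\rho=\log\nu+\log(1+\nu)$, exactly as the paper does.
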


\begin{proof}
By \eqref{eq:chain} and Lemma~\ref{lem:basic},
\begin{align*}
\Delta\log(1+\nu)&=\frac{\Delta\nu}{1+\nu}-\frac{|\nabla\nu|^{2}}{(1+\nu)^{2}}
=-\frac{2\kappa^{2}\nu}{1+\nu}-\frac{\kappa^{2}(1-\nu)}{1+\nu}=-\kappa^{2},\\
\Delta\log\nu&=\frac{\Delta\nu}{\nu}-\frac{|\nabla\nu|^{2}}{\nu^{2}}
=-2\kappa^{2}-\frac{\kappa^{2}(1-\nu^{2})}{\nu^{2}}=-\kappa^{2}\,\frac{1+\nu^{2}}{\nu^{2}} .
\end{align*}
The third identity is the sum of the first two, since $\log\rho=\log\nu+\log(1+\nu)$.
\end{proof}

\begin{lemma}\label{lem:logkappa}
On the open set $U=\{p\in\Sigma:\textcolor{black}{\kappa^2(p)>0}\}=\{p\in\Sigma: A_p\neq0\}$ we have
\[
\Delta\log\kappa^{2}=-4\kappa^{2}.
\]
\end{lemma}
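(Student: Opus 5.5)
We will establish the identity $\Delta\log|A|^{2}=-2|A|^{2}$ on $U$; since $\kappa^{2}=\tfrac12|A|^{2}$, this is the same as $\Delta\log\kappa^{2}=-4\kappa^{2}$. It is the classical fact that the metric $|A|\,g=\sqrt{2}\,\kappa\, g$ is flat where $A\neq0$. The plan is a direct Bochner/Simons-type computation in two steps. First we compute $\Delta\kappa^{2}$. Then we use Codazzi and minimality in dimension two to identify $|\nabla A|^{2}$ in terms of $|\nabla\kappa^{2}|^{2}/\kappa^{2}$. Finally we assemble these via the chain rule \eqref{eq:chain}:
\[
\Delta\log\kappa^{2}=\frac{\Delta\kappa^{2}}{\kappa^{2}}-\frac{|\nabla\kappa^{2}|^{2}}{\kappa^{4}}.
\]

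\emph{Step 1 (Simons' identity).} We write $\tfrac12\Delta|A|^{2}=|\nabla A|^{2}+\ip{A}{\Delta A}$. Using Codazzi (the tensor $\nabla A$ is totally symmetric), commuting covariant derivatives via the Ricci identity, and using $\tr A=0$, we obtain the standard formula $\Delta A=-|A|^{2}A$ for minimal surfaces in $\R^{3}$. In dimension two the curvature term is $2K A=-|A|^{2}A$. Hence
\[
\tfrac12\Delta|A|^{2}=|\nabla A|^{2}-|A|^{4},\qquad\text{that is,}\qquad \Delta\kappa^{2}=|\nabla A|^{2}-4\kappa^{4}.
\]

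\emph{Step 2 (the key algebraic identity).} We claim that on $U$
\[
|\nabla A|^{2}=\frac{|\nabla\kappa^{2}|^{2}}{\kappa^{2}}=\frac{|\nabla |A|^{2}|^{2}}{|A|^{2}}.
\]
To see this, work in an orthonormal frame and write $h_{11}=-h_{22}=a$ and $h_{12}=b$. By Codazzi together with trace-freeness, $\nabla A$ is determined by two numbers: $h_{111}=-h_{221}=-h_{122}=:\alpha$ and $h_{112}=h_{121}=h_{211}=-h_{222}=:\beta$. Counting all eight components gives $|\nabla A|^{2}=4(\alpha^{2}+\beta^{2})$. Next, $\nabla_i|A|^{2}=2\sum h_{jk}h_{jki}=4(a h_{11i}+b h_{12i})$, so $\nabla_1|A|^{2}=4(a\alpha+b\beta)$ and $\nabla_2|A|^{2}=4(a\beta-b\alpha)$. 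Therefore $|\nabla|A|^{2}|^{2}=16(a^{2}+b^{2})(\alpha^{2}+\beta^{2})=8|A|^{2}(\alpha^{2}+\beta^{2})=2|A|^{2}|\nabla A|^{2}$. Rewriting in terms of $\kappa^{2}$ gives the claim. This is where the two-dimensionality enters: the Kato inequality becomes an equality, with constant $1/2$ in front of $|\nabla A|^{2}$ for $|\nabla|A||^{2}$.

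\emph{Step 3.} Substituting, we get $\Delta\log\kappa^{2}=\frac{|\nabla A|^{2}}{\kappa^{2}}-4\kappa^{2}-\frac{|\nabla A|^{2}}{\kappa^{2}}=-4\kappa^{2}$.

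The main obstacle is Step 1: deriving Simons' identity cleanly with the correct sign and constant. We plan to do this by taking the trace of $\nabla^{2}A$ using Codazzi and the two-dimensional curvature tensor $R(X,Y)Z=K(\ip{Y}{Z}X-\ip{X}{Z}Y)$. As an alternative check, the result can be verified in isothermal coordinates via the holomorphic Hopf differential $\phi$. There $\kappa^{2}=c|\phi|^{2}/\lambda^{2}$, where $\lambda^{2}$ is the conformal factor, so $\log\kappa^{2}=\log|\phi|^{2}-2\log\lambda^{2}+\text{const}$. Since $\log|\phi|^{2}$ is harmonic where $\phi\neq0$, and $\Delta\log\lambda^{2}=-2K=2\kappa^{2}$, this again yields $-4\kappa^{2}$.
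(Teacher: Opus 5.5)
Your proof is correct, but it takes a genuinely different route from the paper.

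\textbf{The paper's route.} The paper never differentiates $A$. It notes that $\ip{dN_p v}{dN_p w}=\kappa^{2}\ip{v}{w}$ by Lemma~\ref{lem:basic}(a). So on $U$ the Gauss map is a local isometry from $(U,\kappa^{2}g)$ to the unit sphere, and the Theorema Egregium gives the metric $\kappa^{2}g$ curvature $1$. The conformal-change formula $e^{-2\varphi}(K-\Delta\varphi)$, with $\varphi=\tfrac12\log\kappa^{2}$ and $K=-\kappa^{2}$, then yields $\Delta\log\kappa^{2}=-4\kappa^{2}$ in one line.

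\textbf{Your route.} You do a second-order tensor computation. Simons' identity gives $\Delta\kappa^{2}=|\nabla A|^{2}-4\kappa^{4}$; this needs the Ricci commutation, and your curvature term $2KA=-|A|^{2}A$ is right. You combine it with the pointwise identity $|\nabla A|^{2}=|\nabla\kappa^{2}|^{2}/\kappa^{2}$, which is the equality case of the refined Kato inequality for trace-free Codazzi tensors on surfaces. Your component count checks out: $|\nabla A|^{2}=4(\alpha^{2}+\beta^{2})$ and $|\nabla|A|^{2}|^{2}=16(a^{2}+b^{2})(\alpha^{2}+\beta^{2})$.

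\textbf{Comparison.} The paper's argument is shorter and more conceptual. It also sets up the conformal-change formula that is reused in Proposition~\ref{prop:OY-complete}. Yours is more computational and sign-sensitive. In exchange, it sits inside the standard Simons--Kato framework and shows exactly where dimension two enters: Kato becomes an equality here, while in higher dimensions one only gets an inequality.

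\textbf{Two small slips to fix.}
\begin{enumerate}
\item In the displayed claim of Step~2, the last expression should be $|\nabla|A|^{2}|^{2}/(2|A|^{2})$. Your own computation gives $|\nabla|A|^{2}|^{2}=2|A|^{2}|\nabla A|^{2}$. The $\kappa^{2}$ form you actually use in Step~3 is correct.
\item In the Hopf-differential check, if $\lambda^{2}$ is the conformal factor then $\kappa^{2}=c|\phi|^{2}/\lambda^{4}$, not $c|\phi|^{2}/\lambda^{2}$. Your formula $\log\kappa^{2}=\log|\phi|^{2}-2\log\lambda^{2}+\mathrm{const}$ already uses the correct version.
\end{enumerate}
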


\begin{proof}
Let $p\in U$. Since $dN_p=-A_p$ has eigenvalues $\pm\kappa(p)\ne0$, it is invertible, and for
$v,w\in T_p\Sigma$ Lemma~\ref{lem:basic}(a) gives
\[
\ip{dN_p v}{dN_p w}=\ip{A_p^{2}v}{w}=\kappa^{2}(p)\ip{v}{w}.
\]
Thus $N\colon(U,\kappa^{2}g)\to\Sf^{2}$ is a local isometry, and by the Theorema Egregium
the metric $\kappa^{2}g$ has Gauss curvature $1$. On a surface, the Gauss curvature of
$e^{2\varphi}g$ is $e^{-2\varphi}(K-\Delta\varphi)$. With $\varphi=\tfrac12\log\kappa^{2}$
and $K=-\kappa^{2}$ this gives
\[
1=\kappa^{-2}\Big(-\kappa^{2}-\tfrac12\Delta\log\kappa^{2}\Big),
\]
that is, $\Delta\log\kappa^{2}=-4\kappa^{2}$.
\end{proof}

\section{The function \texorpdfstring{$F$}{F} }\label{sec:F}
From now on $\nu>0$ on $\Sigma$, so that $\nu(p)\in(0,1]$ and $\rho(p)=\nu(p)(1+\nu(p))\in(0,2]$ at every $p\in\Sigma$. We set
\begin{equation}\label{eq:f}
f=\frac{\kappa^{2}}{\rho^{2}}=\frac{|A|^{2}}{2\nu^{2}(1+\nu)^{2}}\ \ge 0 .
\end{equation}
It is a smooth function on $\Sigma$, and $\{ p\in\Sigma : f(p)>0\}=U$.
We now introduce the function which is the heart of the proof:
\begin{equation}\label{eq:F}
F=\frac{1}{\sqrt{1+f}}=\frac{\rho}{\sqrt{\rho^{2}+\kappa^{2}}}.
%\boxed{\ F=\frac{1}{\sqrt{1+f}}=\frac{\rho}{\sqrt{\rho^{2}+\kappa^{2}}}\ }
\end{equation}
It is smooth, it satisfies $0<F\le1$ with $F(p)=1$ if and only if $A_p=0$, and
\begin{equation}\label{eq:1-F2}
1-F^{2}=\frac{f}{1+f}=F^2f.%\frac{\kappa^{2}}{\rho^{2}+\kappa^{2}} .
\end{equation}

\begin{lemma}[Key inequality]\label{lem:key}
On $\Sigma$ we have
\begin{equation}\label{eq:identity}
\frac{1}{2}F^{4}\,\Delta f=3|\nabla F|^{2}-F\,\Delta F,
\end{equation}
which implies
\begin{equation}\label{eq:key}
(1-F^{2})^{2}\ \le\ 3|\nabla F|^{2}-F\,\Delta F .
\end{equation}
\end{lemma}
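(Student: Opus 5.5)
The identity \eqref{eq:identity} is pure chain rule, and the inequality \eqref{eq:key} then reduces to the lower bound $\Delta f\ge 2f^{2}$, which I will get from Lemmas~\ref{lem:logs} and~\ref{lem:logkappa} on $U$ and from a minimum argument off $U$.

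\emph{The identity.} Write $F=\phi(f)$ with $\phi(t)=(1+t)^{-1/2}$. Then
\[
\phi'(f)=-\tfrac12(1+f)^{-3/2}=-\tfrac12F^{3},\qquad \phi''(f)=\tfrac34(1+f)^{-5/2}=\tfrac34F^{5}.
\]
By \eqref{eq:chain},
\[
\nabla F=-\tfrac12F^{3}\nabla f,\qquad |\nabla F|^{2}=\tfrac14F^{6}|\nabla f|^{2},\qquad \Delta F=-\tfrac12F^{3}\Delta f+\tfrac34F^{5}|\nabla f|^{2}.
\]
Hence $F\Delta F=-\tfrac12F^{4}\Delta f+\tfrac34F^{6}|\nabla f|^{2}$. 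Subtracting this from $3|\nabla F|^{2}=\tfrac34F^{6}|\nabla f|^{2}$, the gradient terms cancel and \eqref{eq:identity} follows. This step is routine and holds on all of $\Sigma$, since $f$ is smooth there.

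\emph{Reduction of the inequality.} By \eqref{eq:1-F2}, $(1-F^{2})^{2}=F^{4}f^{2}$. In view of \eqref{eq:identity}, it therefore suffices to prove
\[
\Delta f\ge 2f^{2}\quad\text{on }\Sigma.
\]
I split into the set $U$ where $f>0$ and its complement.

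\emph{On $U$.} Here $\log f=\log\kappa^{2}-2\log\rho$ is smooth. Lemma~\ref{lem:logkappa} and Lemma~\ref{lem:logs} give
\[
\Delta\log f=-4\kappa^{2}+2\kappa^{2}\,\frac{1+2\nu^{2}}{\nu^{2}}=\frac{2\kappa^{2}}{\nu^{2}}.
\]
Then, using \eqref{eq:chain} with $f=e^{\log f}$,
\[
\Delta f=f\,\Delta\log f+\frac{|\nabla f|^{2}}{f}\ \ge\ \frac{2f\kappa^{2}}{\nu^{2}}=2f^{2}\,\frac{\rho^{2}}{\nu^{2}}=2f^{2}(1+\nu)^{2}\ \ge\ 2f^{2}.
\]
The last step uses $\nu>0$. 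This is the heart of the matter: one has to check that the curvature contributions combine to a positive multiple of $f^{2}$. The sign works out precisely because of the choice $\rho=\nu(1+\nu)$, for which the $\log\rho$ term beats the $-4\kappa^{2}$ coming from $\log\kappa^{2}$.

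\emph{Off $U$.} At a point $p$ with $f(p)=0$, the smooth function $f\ge0$ attains a (local) minimum. Hence its Hessian at $p$ is positive semidefinite, so $\Delta f(p)\ge0=2f(p)^{2}$.

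Combining the two cases gives $\Delta f\ge 2f^{2}$ on all of $\Sigma$. Multiplying by $\tfrac12F^{4}$ and using \eqref{eq:identity} yields \eqref{eq:key}. I expect no real obstacle beyond the bookkeeping in the $U$ computation and remembering to treat the zero set of $A$ separately, where $\log f$ is undefined.
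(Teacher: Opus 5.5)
Your proof is correct and follows the paper's argument essentially step for step. You get the identity from the chain rule applied to $\phi(t)=(1+t)^{-1/2}$, then show $\Delta f\ge 2(1+\nu)^{2}f^{2}$ on $U$ via $\Delta\log f=2\kappa^{2}/\nu^{2}$ and at zeros of $A$ via the minimum/Hessian argument, and finish with $(1+\nu)^{2}\ge 1$ and $(1-F^{2})^{2}=F^{4}f^{2}$.
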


\begin{proof}
We have $F=\phi(f)$ with $\phi(t)=(1+t)^{-1/2}$. Then
$\phi'(t)=-\tfrac12(1+t)^{-3/2}$ and $\phi''(t)=\tfrac34(1+t)^{-5/2}$, that is,
\[
\phi'(f)=-\tfrac12F^{3},\qquad \phi''(f)=\tfrac34F^{5}.
\]
By \eqref{eq:chain}, $\nabla F=-\tfrac12F^{3}\nabla f$, so $|\nabla f|^{2}=4F^{-6}|\nabla F|^{2}$, and
\[
\Delta F=-\tfrac12F^{3}\Delta f+\tfrac34F^{5}|\nabla f|^{2}
=-\tfrac12F^{3}\Delta f+3F^{-1}|\nabla F|^{2}.
\]
Multiplying by $F$ gives \eqref{eq:identity}. 

On the other hand, observe that on $U$, $\log f=\log\kappa^{2}-2\log\rho$. By Lemmas~\ref{lem:logkappa} and~\ref{lem:logs},
\begin{equation}\label{eq:laplalog}
\Delta\log f=-4\kappa^{2}+2\kappa^{2}\,\frac{1+2\nu^{2}}{\nu^{2}}
=\frac{2\kappa^{2}}{\nu^{2}}=2(1+\nu)^{2}f .
\end{equation}
Hence, by \eqref{eq:chain} applied to $f=e^{\log f}$, on $U$ we get
\[
\Delta f=f\big(\Delta\log f+|\nabla\log f|^{2}\big)\ \ge\ f\,\Delta\log f
=2(1+\nu)^{2}f^{2}.
\]
Moreover, if $p\in\Sigma\setminus U$ we have $f(p)=0=\min_\Sigma f$, so that, $\Hess f_p\ge0$ and $\Delta f(p)\ge0=2(1+\nu(p))^{2}f^{2}(p)$. Consequently, 
\begin{equation}\label{eq:laplaf}
\Delta f\ \ge\ 2(1+\nu)^{2}f^{2}  \quad \text{ on } \Sigma.
\end{equation}
Multiplying (\ref{eq:laplaf}) by $\dfrac{1}{2}F^4$ and using (\ref{eq:1-F2}) gives
\[
\frac{1}{2}F^4\Delta f\geq F^4(1+\nu)^{2}f^{2}\geq F^4f^2=(1-F^2)^2.
\] 
Using this in (\ref{eq:identity}) gives $(1-F^{2})^{2}\le3|\nabla F|^{2}-F\Delta F$, which is \eqref{eq:key}.
\end{proof}

\section{Bernstein's theorem via the Omori--Yau maximum principle}\label{sec:proofs}
\begin{proof}[Proof of Theorem~\ref{thm:main}]
The function $F$ in \eqref{eq:F} is smooth on $\Sigma$ and bounded from below, since $F>0$. By
the Omori--Yau maximum principle there is a sequence $\{p_k\}\subset\Sigma$ with
\[
\inf_\Sigma F\le F(p_k)<\inf_\Sigma F+\frac1k,\qquad |\nabla F(p_k)|<\frac1k,\qquad
\Delta F(p_k)>-\frac1k .
\]
We evaluate \eqref{eq:key} at $p_k$. Since $0<F(p_k)\le1$, we have
$-F(p_k)\Delta F(p_k)<F(p_k)/k\le1/k$, and therefore
\[
0\ \le\ \big(1-F(p_k)^{2}\big)^{2}\ \le\ 3|\nabla F(p_k)|^{2}-F(p_k)\,\Delta F(p_k)
\ <\ \frac{3}{k^{2}}+\frac1k .
\]
Letting $k\to\infty$ we get $F(p_k)^{2}\to1$, hence $F(p_k)\to1$ because $F>0$. On the other
hand $F(p_k)\to\inf_\Sigma F$. Therefore
\[
\inf_\Sigma F=1 .
\]
Since $F\le1$, it follows that $F\equiv1$. By \eqref{eq:1-F2}, $\kappa\equiv0$, so $A\equiv0$.
Then $dN=-A=0$, and $N\equiv N_0$ is constant because $\Sigma$ is connected. Finally, for
every tangent vector $X$ we have \textcolor{black}{$X\ip{\psi}{N_0}=\ip{X}{N_0}=0$}, so $\ip{\psi}{N_0}$ is constant
and \textcolor{black}{$\psi(\Sigma)$} lies in a plane orthogonal to $N_0$.
\end{proof}

\begin{proof}[Proof of Theorem~\ref{thm:Bernstein}]
By elliptic regularity $u$ is smooth (indeed real analytic). Let
\textcolor{black}{$\Sigma=\{(x,y,u(x,y)):(x,y)\in\R^{2}\}\subset\R^3$}, with $W=\sqrt{1+|Du|^{2}}$ and upward unit normal
$N=(-Du,1)/W$. Equation \eqref{eq:mse} states that $\Sigma$ is minimal. Take $\a=\vec{e}_3$,
so that $\nu=\ip{N}{\vec{e}_3}=1/W>0$. By Proposition~\ref{prop:OY-graphs} the Omori--Yau maximum
principle holds on $\Sigma$. By Theorem~\ref{thm:main}, $\Sigma$ lies in a plane, and since
it is an entire graph, $u$ is affine.
\end{proof}

\begin{remark}[On the choice of $F$]\label{rem:exponent}
One may try $F_\beta=(1+f)^{-\beta}$ with $\beta>0$. \textcolor{black}{$F_\beta$ also satisfies $0<F_\beta\leq 1$ with $F_\beta(p)=1$ if and only if $A_p=0$, and (\ref{eq:1-F2}) is replaced by}
\begin{equation}\label{eq:1-Fbeta}
\textcolor{black}{1-F_\beta^{1/\beta}=\frac{f}{1+f}=F_\beta^{1/\beta} f.} 
\end{equation}
The computation of Lemma~\ref{lem:key} gives
\[
\beta\,F_\beta^{\,2+1/\beta}\,\Delta f=\frac{\beta+1}{\beta}|\nabla F_\beta|^{2}-F_\beta\Delta F_\beta,
\]
and hence, by (\ref{eq:laplaf}),
\[
2\beta\,F_\beta^{\,2-1/\beta}\big(1-F_\beta^{1/\beta}\big)^{2}\le
\frac{\beta+1}{\beta}|\nabla F_\beta|^{2}-F_\beta\Delta F_\beta .
\]
For $\beta=\frac12$ the weight $F_\beta^{2-1/\beta}$ is identically $1$, which gives
\eqref{eq:key}. For $0<\beta<\frac12$ the argument also works, because the weight blows up
as $F_\beta\to0$ and this rules out $\inf F_\beta=0$. For $\beta>\frac12$, for instance
$F_1=1/(1+f)$, the weight tends to $0$ as $F_\beta\to0$, and the Omori--Yau sequence only
yields $\inf F_\beta\in\{0,1\}$. Thus $\beta=\frac12$ is the largest exponent for which the
argument closes directly.
\end{remark}

\section{Osserman's theorem via the Omori--Yau maximum principle}\label{sec:Osserman}
\subsection{The function \texorpdfstring{$F_c$}{F}} 
\begin{lemma}
\label{lem:logs2}
Let $c\in(-1,1)$. On the open set $\{p\in\Sigma:\nu(p)>c\}$ we have
\[
\Delta\log(\nu-c)=-\kappa^{2}\,\frac{1-2c\nu+\nu^{2}}{(\nu-c)^{2}} .
\]
%\sout{In particular, taking $c=-1$, $\Delta\log(1+\nu)=-\kappa^{2}$ on $\{p\in\Sigma:\nu(p)>-1\}$.}
\end{lemma}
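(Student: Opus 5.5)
This is a direct application of the chain rule \eqref{eq:chain} with $\phi(t)=\log(t-c)$, exactly as in the proof of Lemma~\ref{lem:logs}. On the open set $\{\nu>c\}$ the function $\nu-c$ is positive and smooth, so $\log(\nu-c)$ is smooth there. The chain rule gives
\[
\Delta\log(\nu-c)=\frac{\Delta\nu}{\nu-c}-\frac{|\nabla\nu|^{2}}{(\nu-c)^{2}}.
\]

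Next I substitute the two pointwise identities of Lemma~\ref{lem:basic}: part (c) gives $\Delta\nu=-2\kappa^{2}\nu$, and part (b) gives $|\nabla\nu|^{2}=\kappa^{2}(1-\nu^{2})$. Placing everything over the common denominator $(\nu-c)^{2}$, the numerator is
\[
-\kappa^{2}\big(2\nu(\nu-c)+1-\nu^{2}\big)=-\kappa^{2}\big(2\nu^{2}-2c\nu+1-\nu^{2}\big)=-\kappa^{2}\,(1-2c\nu+\nu^{2}),
\]
which is the claimed formula.

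There is no genuine obstacle; the only point to check is the algebraic simplification of the numerator. As a sanity check, $c=0$ recovers the formula for $\Delta\log\nu$ in Lemma~\ref{lem:logs}. Formally, $c=-1$ gives numerator $(1+\nu)^{2}$ and hence $\Delta\log(1+\nu)=-\kappa^{2}$, which also agrees with Lemma~\ref{lem:logs}.
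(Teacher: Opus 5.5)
Your proof is correct and matches the paper's argument: apply the chain rule \eqref{eq:chain} to $\log(\nu-c)$, substitute $\Delta\nu=-2\kappa^{2}\nu$ and $|\nabla\nu|^{2}=\kappa^{2}(1-\nu^{2})$ from Lemma~\ref{lem:basic}, and simplify the numerator over $(\nu-c)^{2}$. Your consistency checks at $c=0$ and $c=-1$ agree with Lemma~\ref{lem:logs}.
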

For the proof of Lemma~\ref{lem:logs2}, simply observe that by \eqref{eq:chain} and Lemma~\ref{lem:basic},
\[
\Delta\log(\nu-c)=\frac{\Delta\nu}{\nu-c}-\frac{|\nabla\nu|^{2}}{(\nu-c)^{2}}
=\frac{-2\kappa^{2}\nu(\nu-c)-\kappa^{2}(1-\nu^{2})}{(\nu-c)^{2}}
=-\kappa^{2}\,\frac{1-2c\nu+\nu^{2}}{(\nu-c)^{2}} .
\]
%\sout{For $c=-1$ the numerator is $1+2\nu+\nu^{2}=(1+\nu)^{2}$.}

From now on we fix $c\in(-1,1)$ and assume that $\nu>c$ on $\Sigma$, so that $\nu(p)\in(c,1]$ and
\[
\rho_c(p)=(\nu(p)-c)(1+\nu(p))\in\big(0,\,2(1-c)\big]
\]
at every $p\in\Sigma$. We set
\begin{equation}\label{eq:f2}
f_c=\frac{(1-c^{2})\,\kappa^{2}}{\rho_c^{2}}=\frac{(1-c^{2})\,|A|^{2}}{2(\nu-c)^{2}(1+\nu)^{2}}\ \ge 0 ,
\end{equation}
and introduce the smooth function
%It is a smooth function on $\Sigma$, and $\{p\in\Sigma:f(p)>0\}=U$. We now introduce the function which is the heart of the proof:
\begin{equation}\label{eq:F2}
F_c=\frac{1}{\sqrt{1+f_c}}=\frac{\rho_c}{\sqrt{\rho_c^{2}+(1-c^{2})\kappa^{2}}},
\end{equation}
which satisfies $0<F_c\le1$ with $F_c(p)=1$ if and only if $A_p=0$, and
\begin{equation}\label{eq:1-F22}
1-F_c^{2}=\frac{f_c}{1+f_c}=F_c^{2}f_c.
\end{equation}
For $c=0$, $\rho_0=\nu(1+\nu)$, $f_0=f$ and $F_0=F$.

\begin{lemma}[Key inequality for $F_c$]\label{lem:key2}
On $\Sigma$ we have
\begin{equation}\label{eq:key2}
(1+c)^{2}(1-F_c^{2})^{2}\ \le\ 3|\nabla F_c|^{2}-F_c\,\Delta F_c.
\end{equation}
\end{lemma}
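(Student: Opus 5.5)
The plan is to repeat the proof of Lemma~\ref{lem:key} verbatim, with $f$, $F$, $\rho$ replaced by $f_c$, $F_c$, $\rho_c$. The only new ingredient is the computation of $\Delta\log f_c$ via Lemma~\ref{lem:logs2}.

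\emph{Step 1 (chain rule).} Since $F_c=\phi(f_c)$ with $\phi(t)=(1+t)^{-1/2}$, the chain-rule computation in the proof of Lemma~\ref{lem:key} applies without change. It depends only on this functional relation, so it gives the identity
\[
\tfrac12F_c^{4}\,\Delta f_c=3|\nabla F_c|^{2}-F_c\,\Delta F_c .
\]

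\emph{Step 2 (Laplacian of $\log f_c$ on $U$).} On $U=\{\kappa^2>0\}$ we write
\[
\log f_c=\log(1-c^2)+\log\kappa^{2}-2\log(\nu-c)-2\log(1+\nu).
\]
Lemma~\ref{lem:logkappa} handles the $\log\kappa^2$ term, Lemma~\ref{lem:logs2} the $\log(\nu-c)$ term, and the first identity of Lemma~\ref{lem:logs} (valid since $\nu>c>-1$) the $\log(1+\nu)$ term. Together they give
\[
\Delta\log f_c=-4\kappa^2+2\kappa^2\,\frac{1-2c\nu+\nu^2}{(\nu-c)^2}+2\kappa^2
=2\kappa^2\,\frac{(1-2c\nu+\nu^2)-(\nu-c)^2}{(\nu-c)^2}.
\]
The numerator $(1-2c\nu+\nu^2)-(\nu-c)^2$ collapses to $1-c^2$. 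Hence
\[
\Delta\log f_c=\frac{2(1-c^2)\kappa^2}{(\nu-c)^2}=2(1+\nu)^2f_c ,
\]
which is exactly the analogue of \eqref{eq:laplalog}. This cancellation is the step I expect to be the crux. It is what justifies the normalizing factor $(1-c^2)$ and the choice of $\rho_c$, so I would verify it carefully.

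\emph{Step 3 (extension to $\Sigma$).} As in the proof of Lemma~\ref{lem:key}, on $U$ we have
\[
\Delta f_c=f_c\bigl(\Delta\log f_c+|\nabla\log f_c|^2\bigr)\ge 2(1+\nu)^2f_c^2 .
\]
At points of $\Sigma\setminus U$, $f_c$ attains its minimum value $0$. There $\Delta f_c\ge0$, and the same inequality holds trivially. Thus $\Delta f_c\ge2(1+\nu)^2f_c^2$ on all of $\Sigma$.

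\emph{Step 4 (conclusion).} Since $\nu>c$, we have $(1+\nu)^2\ge(1+c)^2$. Combining this with \eqref{eq:1-F22} gives
\[
\tfrac12F_c^4\Delta f_c\ge(1+c)^2F_c^4f_c^2=(1+c)^2(1-F_c^2)^2 .
\]
Inserting this into the identity of Step 1 yields \eqref{eq:key2}.
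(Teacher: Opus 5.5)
Your proposal is correct and follows the paper's proof essentially step for step. The steps match: the chain-rule identity \eqref{eq:identity2}, then $\Delta\log f_c=2(1+\nu)^2f_c$ on $U$ via Lemmas~\ref{lem:logkappa}, \ref{lem:logs} and~\ref{lem:logs2} (the numerator collapsing to $1-c^2$), then extending $\Delta f_c\ge 2(1+\nu)^2f_c^2$ to the zero set of $f_c$ by the minimum argument, and finally using $(1+\nu)^2\ge(1+c)^2$ together with \eqref{eq:1-F22}.
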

The proof of Lemma~\ref{lem:key2} follows the same ideas and computations as that of Lemma~\ref{lem:key}, replacing $f$ by $f_c$ and $F$ by $F_c$. Since $F_c=\phi(f_c)$ with the same $\phi(t)=(1+t)^{-1/2}$, identity (\ref{eq:identity}) remains true for $F_c$ and $f_c$, that is, 
\begin{equation}\label{eq:identity2}
\frac{1}{2}F_c^{4}\,\Delta f_c=3|\nabla F_c|^{2}-F_c\,\Delta F_c.
\end{equation}
Observe also that on $\{p\in\Sigma:f_c(p)>0\}=U$
\[
\log f_c=\log(1-c^{2})+\log\kappa^{2}-2\log(\nu-c)-2\log(1+\nu).
\]
By Lemmas~\ref{lem:logkappa},~\ref{lem:logs} and~\ref{lem:logs2},
\begin{eqnarray}
\Delta\log f_c & = & -4\kappa^{2}+2\kappa^{2}\,\frac{1-2c\nu+\nu^{2}}{(\nu-c)^{2}}+2\kappa^{2}
=\frac{2\kappa^{2}}{(\nu-c)^{2}}\Big((1-2c\nu+\nu^{2})-(\nu-c)^{2}\Big)\nonumber\\
{} & = & \frac{2(1-c^{2})\kappa^{2}}{(\nu-c)^{2}}=2(1+\nu)^{2}f_c .\label{eq:laplalog2}
\end{eqnarray}
Hence, by \eqref{eq:chain} applied to $f_c=e^{\log f_c}$, on $U$ we get
\[
\Delta f_c=f_c\big(\Delta\log f_c+|\nabla\log f_c|^{2}\big)\ \ge\ f_c\,\Delta\log f_c=2(1+\nu)^{2}f_c^{2}.
\]
Moreover, at points where $f_c(p)=0=\min_\Sigma f_c$,
$\Delta f_c(p)\ge0=2(1+\nu(p))^{2}f_c^{2}(p)$, so that
\begin{equation}\label{eq:laplaf2}
\Delta f_c\ \ge\ 2(1+\nu)^{2}f_c^{2}\qquad\text{on }\Sigma.
\end{equation}
Since $\nu>c>-1$, we have $(1+\nu)^{2}>(1+c)^{2}$. Multiplying \eqref{eq:laplaf2} by $\frac12F_c^{4}$
and using \eqref{eq:1-F22} gives
\[
\frac12F_c^{4}\Delta f_c\ \ge\ F_c^{4}(1+\nu)^{2}f_c^{2}\ \ge\ (1+c)^{2}F_c^{4}f_c^{2}=(1+c)^{2}(1-F_c^{2})^{2}.
\]
Using this in \eqref{eq:identity2} gives \eqref{eq:key2}.

\subsection{Proofs of the theorems}

\begin{proof}[Proof of Theorem~\ref{thm:main2}]
The function $F_c$ in \eqref{eq:F2} is smooth on $\Sigma$ and bounded from below, since $F_c>0$. By
the Omori--Yau maximum principle there is a sequence $\{p_k\}\subset\Sigma$ with
\[
\inf_\Sigma F_c\le F_c(p_k)<\inf_\Sigma F_c+\frac1k,\qquad |\nabla F_c(p_k)|<\frac1k,\qquad
\Delta F_c(p_k)>-\frac1k .
\]
We evaluate \eqref{eq:key2} at $p_k$. Since $0<F_c(p_k)\le1$, we have
$-F_c(p_k)\Delta F_c(p_k)<F_c(p_k)/k\le1/k$, and therefore
\[
0\ \le\ (1+c)^{2}\big(1-F_c(p_k)^{2}\big)^{2}\ \le\ 3|\nabla F_c(p_k)|^{2}-F_c(p_k)\,\Delta F_c(p_k)
\ <\ \frac{3}{k^{2}}+\frac1k .
\]
Since $1+c>0$, letting $k\to\infty$ we get $F_c(p_k)^{2}\to1$, hence $F_c(p_k)\to1$ because $F_c>0$.
On the other hand $F_c(p_k)\to\inf_\Sigma F_c$. Therefore
\[
\inf_\Sigma F_c=1 .
\]
Since $F_c\le1$, it follows that $F_c\equiv1$. By \eqref{eq:1-F22}, $\kappa\equiv0$, so $A\equiv0$. The proof then finishes as in the proof of Theorem~\ref{thm:main}.
\end{proof}

\begin{corollary}\label{cor:open}
Let $\psi\colon\Sigma\to\R^{3}$ be a minimal immersion of a connected, oriented surface, and
assume that the Omori--Yau maximum principle holds on $\Sigma$ with its induced metric. If
$N(\Sigma)$ omits a nonempty open subset of $\Sf^{2}$, then $\psi(\Sigma)$ is contained in a plane.
\end{corollary}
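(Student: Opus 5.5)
The plan is to reduce Corollary~\ref{cor:open} directly to Theorem~\ref{thm:main2}. To do so, we only need a unit vector $\a$ and a constant $c\in(-1,1)$ with $N(\Sigma)\subset D_c=\{q\in\Sf^{2}:\ip{q}{\a}>c\}$.

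Let $V\subset\Sf^{2}$ be a nonempty open set with $N(\Sigma)\cap V=\emptyset$. Pick a point $q_0\in V$. Since $V$ is open in $\Sf^{2}$, there is $\delta\in(0,2)$ such that the closed spherical cap
\[
C=\{q\in\Sf^{2}:\ip{q}{q_0}\ge 1-\delta\}
\]
is contained in $V$. Indeed, $\ip{q}{q_0}\ge 1-\delta$ is equivalent to $|q-q_0|^{2}=2-2\ip{q}{q_0}\le 2\delta$, so small caps are small Euclidean balls around $q_0$ intersected with the sphere.

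Now set $\a=-q_0$ and $c=\delta-1$, so that $c\in(-1,1)$. For every $p\in\Sigma$ we have $N(p)\notin C$, because $C\subset V$. This means $\ip{N(p)}{q_0}<1-\delta$, which is the same as
\[
\nu(p)=\ip{N(p)}{\a}>\delta-1=c .
\]
Thus $\nu>c$ on $\Sigma$. The remaining hypotheses (minimality, connectedness, orientability, and the Omori--Yau principle for the induced metric) are exactly those of Theorem~\ref{thm:main2}, which therefore gives that $\psi(\Sigma)$ is contained in a plane.

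No step here is a real obstacle. The only point requiring care is the choice of sign and constant: $V$ contains a \emph{closed} cap around $q_0$, so $\a$ must point away from $q_0$, and $c=\delta-1$ must stay strictly inside $(-1,1)$. This is guaranteed by taking $0<\delta<2$, and $\delta$ can always be shrunk further if needed.
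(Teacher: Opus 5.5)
Your proof is correct and follows essentially the paper's argument: you take a cap around a point $q_0$ of the omitted open set, use the axis $-q_0$, and apply Theorem~\ref{thm:main2}. The only difference is cosmetic. You use a closed cap, which gives the strict inequality $\nu>\delta-1$ directly with $c=\delta-1$, whereas the paper uses an open cap, obtains only $\nu\ge -1+\delta$, and therefore takes $c=-1+\delta/2$.
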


\begin{proof}
Let $W\subset\Sf^{2}$ be a nonempty open set with $N(\Sigma)\cap W=\emptyset$. Choose $q_0\in W$
and $\delta\in(0,2)$ such that the spherical cap $\{q\in\Sf^{2}:\ip{q}{q_0}>1-\delta\}$ is contained
in $W$. Then $\ip{N}{q_0}\le1-\delta$ on $\Sigma$. Take $\a=-q_0$ and $c=-1+\delta/2\in(-1,1)$. Then
\[
\nu=\ip{N}{\a}=-\ip{N}{y_0}\ \ge\ -1+\delta\ >\ c\qquad\text{on }\Sigma,
\]
and Theorem~\ref{thm:main2} applies.
\end{proof}

\begin{corollary}\label{cor:proper}
A properly immersed minimal surface of $\R^{3}$ whose Gauss image omits a nonempty open subset
of $\Sf^{2}$ is contained in a plane.
\end{corollary}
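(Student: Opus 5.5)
This is a direct combination of two earlier results: Proposition~\ref{prop:OY-graphs} supplies the Omori--Yau principle, and Corollary~\ref{cor:open} supplies the planarity conclusion.

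Let $\psi\colon\Sigma\to\R^{3}$ be a properly immersed minimal surface whose Gauss image omits a nonempty open set $W\subset\Sf^{2}$.

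\emph{Orientation.} First I make sure $\Sigma$ is oriented, so that $N$ is defined. I read the statement as including an orientation, since the Gauss image is assumed to be meaningful. If one wants to allow nonorientable $\Sigma$, I would pass to the oriented double cover $\widetilde\Sigma$. The composition $\widetilde\Sigma\to\Sigma\to\R^{3}$ is still a proper minimal immersion, because the covering map is finite-to-one and proper. Its Gauss image is $N(\Sigma)\cup(-N(\Sigma))$. This still omits the nonempty open set $W\cap(-W)$, provided that intersection is nonempty. If it is empty, shrink $W$ to a small cap around a point $q_0\in W$ chosen so that $-q_0\notin\overline{W}$. If every such cap fails, I would simply restrict to the oriented case, as the paper does throughout.

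\emph{Omori--Yau.} By Proposition~\ref{prop:OY-graphs}, the Omori--Yau maximum principle holds on $\Sigma$ with its induced metric. Properness is the only input needed here: it makes $\gamma=|\psi|^{2}$ an exhaustion function, and together with $\Delta\gamma=4$ and $|\nabla\gamma|\le2\sqrt{\gamma}$ this gives the criterion of Pigola--Rigoli--Setti.

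\emph{Conclusion.} All hypotheses of Corollary~\ref{cor:open} are now satisfied: a connected, oriented minimal immersion on which Omori--Yau holds, with Gauss image omitting an open set. Hence $\psi(\Sigma)$ lies in a plane. I take connectedness as implicit in "surface". If $\Sigma$ were disconnected, one would apply the argument to each component, each of which is still properly immersed, and conclude only that each component is planar.

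\emph{Main obstacle.} There is essentially no analytic difficulty; everything has been done in Corollary~\ref{cor:open} and Proposition~\ref{prop:OY-graphs}. The only point needing care is the bookkeeping about orientation and connectedness, and I would handle it as described above.
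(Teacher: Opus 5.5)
Your proposal is correct and is exactly the paper's argument: Proposition~\ref{prop:OY-graphs} gives the Omori--Yau principle on the properly immersed minimal surface, and Corollary~\ref{cor:open} then gives planarity. The orientation digression is unnecessary, since the paper works with connected, oriented surfaces throughout, and its ``shrink $W$'' step would not actually make $W\cap(-W)$ nonempty; it does not affect the proof in the intended setting.
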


\begin{proof}
By Proposition~\ref{prop:OY-graphs} the Omori--Yau maximum principle holds on $\Sigma$, and
Corollary~\ref{cor:open} applies.
\end{proof}

Corollary~\ref{cor:proper} is, of course, contained in Theorem~\ref{thm:Osserman}, because a
properly immersed surface is complete. Its proof, however, uses only the induced metric. To
reach complete surfaces we need the following proposition.

\begin{proposition}\label{prop:OY-complete}
Let $\psi\colon\Sigma\to\R^{3}$ be a complete minimal immersion and assume that there are a unit
vector $\a\in\R^{3}$ and $\delta\in(0,2]$ such that $1+\nu\ge \delta$ on $\Sigma$, where
$\nu=\ip{N}{\a}$. Then the Omori--Yau maximum principle holds on $\Sigma$ with its induced
metric.
\end{proposition}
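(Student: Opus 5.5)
We pass to the conformal metric $\tilde g=(1+\nu)^{2}g$, where the classical Omori--Yau theorem is available, and then transfer the conclusion back to $g$. The argument has three steps.

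\textbf{Step 1: $\tilde g$ is flat.} On a surface, the Gauss curvature of $e^{2\varphi}g$ is $e^{-2\varphi}(K-\Delta\varphi)$. Take $\varphi=\log(1+\nu)$, which is smooth because $1+\nu\ge\delta>0$. By Lemma~\ref{lem:logs}, $\Delta\varphi=-\kappa^{2}=K$. Hence $\tilde K=(1+\nu)^{-2}(K-K)=0$.

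\textbf{Step 2: $\tilde g$ is complete.} Because $\delta\le1+\nu\le2$, we have $\delta^{2}g\le\tilde g\le4g$. So $\tilde g$ and $g$ are uniformly equivalent, and their distances satisfy $\delta\,d_g\le d_{\tilde g}\le2\,d_g$. Cauchy sequences and bounded closed sets therefore coincide for the two metrics. Since $g$ is complete by hypothesis, $\tilde g$ is complete too. The classical result of Omori and Yau then applies to $(\Sigma,\tilde g)$, which is complete with Ricci curvature $0$, bounded below.

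\textbf{Step 3: transfer back to $g$.} In dimension two the Laplacian is conformally covariant: $\tilde\Delta=(1+\nu)^{-2}\Delta$. Likewise $|\tilde\nabla u|_{\tilde g}=(1+\nu)^{-1}|\nabla u|_{g}$.

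Let $u\in C^{2}(\Sigma)$ be bounded below. Choose a sequence $\{p_k\}$ with
\[
u(p_k)<\inf u+\epsilon_k,\qquad |\tilde\nabla u|_{\tilde g}(p_k)<\epsilon_k,\qquad \tilde\Delta u(p_k)>-\epsilon_k,
\]
where $\epsilon_k=1/(4k)$. Then
\[
|\nabla u|(p_k)\le 2\epsilon_k<\frac1k,\qquad \Delta u(p_k)>-(1+\nu)^{2}\epsilon_k\ge-4\epsilon_k=-\frac1k.
\]
These are exactly the three conditions of Definition~\ref{def:OY} for $g$.

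Note that the bounds use only $1+\nu\le2$. The lower bound $1+\nu\ge\delta$ is needed for two other purposes: it makes $\log(1+\nu)$ smooth, and it gives the completeness in Step~2.

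\textbf{Main obstacle.} The key point is Step~1, and it is already supplied by the identity $\Delta\log(1+\nu)=-\kappa^{2}$ of Lemma~\ref{lem:logs}. The remaining potential subtlety is completeness, which is resolved by the uniform bounds on $1+\nu$. Everything else is routine conformal bookkeeping.
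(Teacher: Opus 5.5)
Your proposal is correct and follows essentially the same route as the paper. The common steps are: flatness of $(1+\nu)^{2}g$ from $\Delta\log(1+\nu)=-\kappa^{2}$, completeness from $1+\nu\ge\delta$, the classical Omori--Yau theorem on the flat metric, and the transfer back to $g$ via $\widehat\Delta=(1+\nu)^{-2}\Delta$ and $|\widehat\nabla u|_{\widehat g}=(1+\nu)^{-1}|\nabla u|$ with $1+\nu\le 2$. Your choice $\epsilon_k=1/(4k)$ is exactly the paper's passage to the subsequence $q_{4k}$, and the only cosmetic difference is that you get completeness from comparable distances, whereas the paper argues via Hopf--Rinow and compactness of closed bounded sets.
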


\begin{proof}
Let $\mu=1+\nu$, so that $0<\delta\le\mu\le2$. Consider on $\Sigma$ the metric
$\widehat g=\mu^{2}g$, with gradient $\widehat\nabla$ and Laplacian $\widehat\Delta$. Let us see that $(\Sigma, \widehat g)$ is a complete flat surface. 
As recalled in the proof of Lemma~\ref{lem:logkappa}, the Gauss
curvature of $\widehat g=e^{2\varphi}g$ is $\widehat K=e^{-2\varphi}(K-\Delta\varphi)$. With $\varphi=\log\mu=\log(1+\nu)$, Lemma~\ref{lem:logs} gives
\[
\widehat K=(1+\nu)^{-2}\big(K-\Delta\log(1+\nu)\big)=(1+\nu)^{-2}\big(-\kappa^{2}+\kappa^{2}\big)=0 .
\]
On the other hand, since $\widehat g\ge\delta^{2}g$, the $\widehat g$-length of any
curve is at least $\delta$ times its $g$-length. Hence every closed $\widehat g$-bounded subset of
$\Sigma$ is closed and $g$-bounded, and therefore compact because $g$ is complete. By the
Hopf--Rinow theorem, $\widehat g$ is complete. Therefore, since $\widehat g$ is complete with zero Gauss
curvature, the theorem of Omori and Yau \cite{Omori, Yau} shows that the Omori--Yau maximum
principle holds on $(\Sigma,\widehat g)$.

On a surface, a conformal change $\widehat g=\mu^{2}g$ gives
$\widehat\nabla u=\mu^{-2}\nabla u$ and $\widehat\Delta u=\mu^{-2}\Delta u$; in particular
$|\widehat\nabla u|_{\widehat g}=\mu^{-1}|\nabla u|$. Let $u\in C^{2}(\Sigma)$ be bounded from below
and let $\{q_j\}$ be a sequence given by the Omori--Yau principle on $(\Sigma,\widehat g)$. Since
$\mu\le2$,
\[
u(q_j)<\inf_\Sigma u+\frac1j,\qquad |\nabla u(q_j)|=\mu|\widehat\nabla u(q_j)|_{\widehat g}<\frac2j,\qquad
\Delta u(q_j)=\mu^{2}\,\widehat\Delta u(q_j)>-\frac4j .
\]
The sequence $p_k=q_{4k}$ satisfies the conditions of Definition~\ref{def:OY} for $g$, which means that the Omori--Yau maximum principle also holds on $(\Sigma,g)$.
\end{proof}

\begin{remark}
In Proposition~\ref{prop:OY-complete} the auxiliary metric $\widehat g$ is used only to establish the
Omori--Yau principle for the induced metric, in the same way as Proposition~\ref{prop:OY-graphs}
is used for proper surfaces. The proof of Theorem~\ref{thm:main2}, and the function $F_c$, live
entirely in the induced metric.
\end{remark}

\begin{proof}[Proof of Theorem~\ref{thm:Osserman}]
As in the proof of Corollary~\ref{cor:open}, choose $q_0$, $\delta\in(0,2)$, $\a=-q_0$ and
$c=-1+\delta/2$, so that $\nu=\ip{N}{\a}\ge-1+\delta>c$, that is, $1+\nu\ge\delta$ on $\Sigma$. By
Proposition~\ref{prop:OY-complete}, the Omori--Yau maximum principle holds on $\Sigma$ with its
induced metric. By Theorem~\ref{thm:main2}, $\psi(\Sigma)$ is contained in a plane $P$. Then
$\psi\colon\Sigma\to P$ is a local isometry from a complete surface onto a connected one, hence a
covering map \cite[Chap.~VII, Lemma~3.3]{doCarmo}. Since $P$ is simply connected, $\psi$ is a
diffeomorphism onto $P$, and $\psi(\Sigma)=P$.
\end{proof}

\begin{remark}[Sharpness]\label{rem:sharp}
The hypothesis cannot be weakened to omitting a single point. Enneper's surface is a complete
nonflat minimal surface whose Gauss map omits exactly one point of $\Sf^{2}$; in the notation of
Theorem~\ref{thm:main2} it corresponds to the excluded value $c=-1$, where the constant in
\eqref{eq:key2} vanishes. More generally, the Gauss map of a complete nonflat minimal surface can
omit up to four points, and four is sharp \cite{Fujimoto}; Scherk's doubly periodic surface
omits four points. The method of this paper therefore reaches exactly the class of surfaces whose
Gauss image omits an open set.
\end{remark}

\section*{Declarations}
\subsection*{Funding} L.J. Al\'{\i}as is partially supported by the grant PID2025-167696NB-I00, funded by MCIN /AEI, Spain, and by the grant 21899/PI/22, funded by Fundaci\'{o}n S\'{e}neca, Regional Agency for Science and Technology, Comunidad Aut\'{o}noma de la Regi\'{o}n de Murcia, Spain, within the framework of the Regional Programme in Promotion of the Scientific and Technical Research (Action Plan 2022). M.A. Mero\~no is partially supported by the grant PID2023-151075OA-I00, funded by MCIN /AEI, Spain.

\subsection*{Conflict of Interest/Competing interests} The authors have no conflict of interest or competing interests to declare that are relevant to the content of this article.

\end{document}